\documentclass[journal]{IEEEtran}
\usepackage[figuresright]{rotating}
\usepackage{amssymb}
\usepackage{amsthm}
\usepackage{multicol}
\usepackage{subfigure}
\usepackage{graphicx}
\usepackage{epstopdf}
\usepackage{fullpage}
\usepackage{latexsym,amsmath}
\usepackage{stmaryrd}
\usepackage{algorithm,algorithmic}
\usepackage{subfigure}
\usepackage{amsfonts}
\usepackage{xcolor,multirow}
\usepackage{cite}
\usepackage{bm}
\usepackage{url}
\usepackage{diagbox}
\usepackage{array}
\usepackage{booktabs}
\usepackage{footmisc} 
\usepackage{pifont}
\usepackage{caption}
\usepackage{mathrsfs}
\usepackage{graphicx}
\usepackage{makecell}
\usepackage{subcaption}  
\usepackage[normalem]{ulem}
\usepackage{adjustbox} 

\renewcommand{\arraystretch}{1.25}

\newtheorem{definition}{Definition}
\newtheorem{remark}{Remark}
\newtheorem{property}{Property}

\newtheorem{proposition}{Proposition}

\ifCLASSINFOpdf
\else
\fi
\begin{document}
	
	\title{Tensor Orthogonal Subspace Split: Theory and Applications}

	\author{Jifei~Miao, Juan~Han, Michael~K.~Ng, ~\IEEEmembership{Senior Member,~IEEE}, and Kit Ian Kou
		\thanks{Jifei Miao is with the School of Mathematics and Statistics, Yunnan University, Kunming, Yunnan, 650091, China (e-mail: jifmiao@163.com
			).}
		\thanks{Juan Han is with the School of Mathematics and Physics, Anhui Jianzhu University, Hefei, Anhui, 230601, China (e-mail:
			juanhan0604@163.com).}
		\thanks{Michael K. Ng is with the Department of Mathematics, Hong Kong
			Baptist University, Kowloon Tong, Hong Kong, China (e-mail: michael-ng@hkbu.edu.hk).
		}
		\thanks{Kit Ian Kou is with the Department of Mathematics, Faculty of
			Science and Technology, University of Macau, Macau 999078, China
			(e-mail: kikou@umac.mo).}
	}

	
	\maketitle
	
	\begin{abstract}
		Tensor representations have emerged as a fundamental paradigm for modeling multidimensional data by preserving intrinsic correlations across multiple modes. This paper proposes a novel theoretical framework, termed Tensor Orthogonal Subspace Split (TOSS), which explicitly splits a tensor, along a prescribed mode, into two orthogonal components: \emph{a dominant component} lying in a prescribed subspace and \emph{a residual component} lying in the corresponding orthogonal complement. We first present the general formulation of TOSS and systematically investigate its fundamental properties. As an important and practically meaningful special case, we further introduce the rank-one TOSS, which imposes a separable rank-one structure along the splitting mode and admits a clear geometric interpretation. This formulation naturally captures dominant consistent patterns while effectively isolating orthogonal residual component. The proposed framework establishes a unified theoretical foundation for tensor-domain orthogonal split and opens new avenues for structured tensor modeling across diverse applications. Building upon the developed TOSS theory, hyperspectral image restoration and color video background modeling are considered as two representative tasks, for which corresponding optimization models are formulated. Efficient algorithms are developed to solve the resulting problems. Extensive experimental results validate the effectiveness and superiority of the proposed approaches.
	\end{abstract}
	
	\begin{IEEEkeywords}
		Tensor Orthogonal Subspace Split (TOSS), Rank-one TOSS, Multidimensional Data Representation, Hyperspectral Image Restoration, Color Video Background Modeling.
	\end{IEEEkeywords}
	
	\IEEEpeerreviewmaketitle
	
	\section{Introduction}
	Multidimensional data naturally arise in numerous scientific and engineering applications, including hyperspectral imaging, video sequences, medical imaging, and multi-sensor systems. Such data are commonly represented as high-order tensors to preserve intrinsic spatial, spectral, and temporal structures \cite{auddy2025tensors, han2023multi}. Exploiting these multi-way correlations has become a central topic in signal processing and machine learning, leading to extensive research on tensor decomposition, low-rank modeling, and structured regularization \cite{TOKCAN2026110191,heng2023robust,kolda2009tensor, cichocki2015tensor}.

	Existing tensor-based methods have extensively exploited low-rank priors through various tensor decomposition and relaxation techniques, including Tucker decomposition \cite{de2000multilinear}, CANDECOMP/PARAFAC (CP) factorization \cite{harshman1970foundations}, tensor train (TT) decomposition \cite{oseledets2011tensor}, tensor ring (TR) decomposition \cite{zhao2016tensor}, tensor wheel decomposition \cite{wu2022tensor}, fully-connected tensor network decomposition \cite{ zheng2021fully}, as well as recent adaptive tensor network decomposition frameworks that aim to automatically adjust network structures or ranks according to the underlying data characteristics \cite{nie2021adaptive,nie2023adaptive,liu2024adaptively}. In addition, tensor nuclear norm regularization under different algebraic frameworks, particularly those based on tensor singular value decomposition (t-SVD), has also been widely studied \cite{kilmer2013third,lu2019tensor,lu2016tensor}. These approaches have achieved remarkable success in a wide range of reconstruction, completion, and denoising tasks by promoting compact representations of high-dimensional data. Despite their success, most existing methods mainly emphasize compact representation and low-rank approximation, rather than the explicit characterization of structurally different information components in tensor data.
	\begin{figure*}[htbp]
		\centering
		\includegraphics[width=16.3cm, height=9cm]{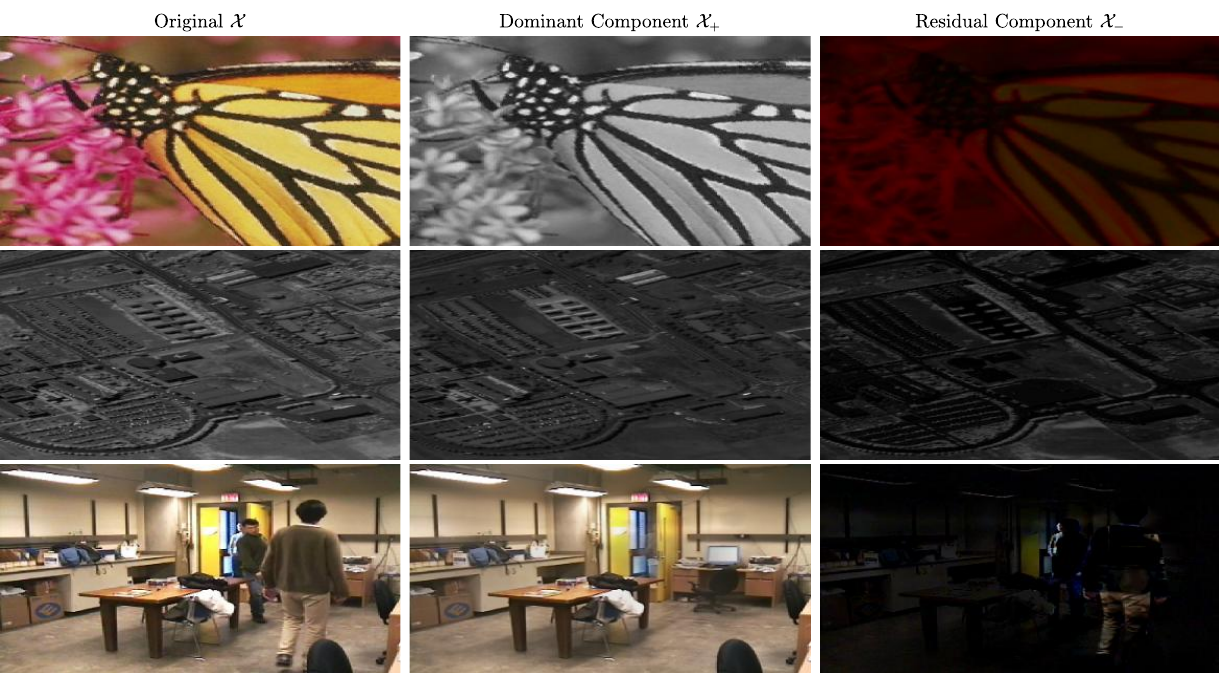}
		\caption{Illustrative examples of the rank-one TOSS mechanism. The first row shows a color image, the second row shows one band of a hyperspectral image, and the third row shows one frame of a color video. The first column corresponds to the original data $\mathcal{X}$, the second column represents the dominant component $\mathcal{X}_{+}$, and the third column represents the residual component $\mathcal{X}_{-}$ that is orthogonal to the dominant component.}
		\label{TOSS_exam}
	\end{figure*}
	
	In real high-order tensor data, an important observation is that different information components often possess markedly different structural roles. Generally, the dominant part of the data is governed by a common latent direction or a low-dimensional subspace shared by tensor fibers along a prescribed mode, thus forming stable and consistent patterns with clear physical or semantic meaning, as illustrated by $\mathcal{X}_{+}$ in Fig. \ref{TOSS_exam}. In contrast, the remaining part typically reflects deviations from such consistent structure and is often characterized by fine details, localized variations, degradations, perturbations, or noise, as illustrated by $\mathcal{X}_{-}$ in Fig. \ref{TOSS_exam} \cite{bioucas2012hyperspectral,yang2025subspace}. However, this type of mode-wise structural consistency and its orthogonal deviation cannot be explicitly or adequately captured by existing tensor decomposition methods or conventional low-rank models. Therefore, beyond compact representation or low-rank approximation, there is a clear need for a tensor framework that can explicitly characterize a subspace-consistent dominant component along a prescribed mode together with its orthogonal residual counterpart.
	
	In fact, this question has a classical answer in vector spaces, namely, orthogonal projection theory, which provides a rigorous mechanism for splitting a signal into a component lying in a prescribed subspace and a residual component in its orthogonal complement \cite{kreyszig1991introductory,nimark2012projection}. This type of split is geometrically transparent, energy preserving, and highly interpretable, and thus has long served as a fundamental tool in functional analysis, signal processing, and numerical optimization \cite{Horn2012MatrixAnalysis,chang2020orthogonal}. Naturally, we would like to extend this classical viewpoint to tensor spaces, so as to explicitly split high-dimensional tensor data, along a prescribed mode, into a subspace-consistent dominant component and its orthogonal residual counterpart. This motivates the development of an orthogonal subspace splitting theory for tensors.
	
	Therefore, this paper proposes a novel theoretical framework termed Tensor Orthogonal Subspace Split (TOSS). By inducing orthogonal projections along a prescribed tensor mode, TOSS extends the classical idea of orthogonal subspace splitting from vectors to high-order tensors in a consistent and fiber-wise manner. The proposed framework enables an exact and energy-preserving split of tensor data into a dominant subspace component and an orthogonal residual component. Beyond its theoretical significance, this splitting mechanism offers a new modeling paradigm for challenging high-dimensional tensor data processing tasks, in which the two components can be characterized separately yet jointly exploited, thus potentially breaking through the performance limitations of existing methods.
	
	Within the general TOSS framework, we further investigate an important special case referred to as rank-one TOSS, where the underlying subspace reduces to a single dominant direction. This formulation induces a separable rank-one structure along the splitting mode, yielding an explicit representation that naturally captures consistent patterns while isolating orthogonal deviations. Thanks to its clear geometric interpretation, rank-one TOSS is particularly suitable for data with strong mode-wise consistency. Based on this theoretical tool, hyperspectral image restoration and color video background modeling are considered as two representative tasks, and corresponding optimization models are formulated. Efficient algorithms based on the scaled alternating direction method of multipliers (ADMM) \cite{neal2011distributed} are further developed to solve the resulting problems.
	
	In summary, the main contributions of this paper are as follows:
	\begin{itemize}
		\item A general theoretical framework for tensor orthogonal subspace split, termed TOSS, is established, together with its fundamental properties. The proposed framework provides a new modeling mechanism for challenging high-dimensional tensor data processing tasks, in which the dominant subspace component and the orthogonal residual component can be modeled separately yet jointly exploited.
		\item Rank-one TOSS is identified as an important and practically meaningful special case. It not only provides explicit formulations, clear geometric interpretations, and structural insights that help better understand the general TOSS framework, but also enjoys direct practical relevance, since many types of high-dimensional tensor data naturally admit rank-one TOSS structures.
		\item Based on rank-one TOSS, corresponding structured optimization models are formulated for two representative applications, namely hyperspectral image restoration and color video background modeling. Efficient algorithms are developed for the resulting problems, and extensive experiments demonstrate the effectiveness and superiority of the proposed methods.
	\end{itemize}
	
	The remainder of this paper is organized as follows. Section \ref{sec:2} introduces the notation and preliminaries. Section \ref{sec:3} presents the fundamental theoretical framework of TOSS together with its special case, rank-one TOSS. Based on this framework, Section \ref{sec:4} develops two representative applications, namely hyperspectral image restoration and color video background modeling. Experimental results are provided in Section \ref{sec:5}. Finally, Section \ref{sec:6} concludes the paper. All algorithmic derivations are placed in the \textit{Supplementary Materials}.
	
	
	\section{Notation and Preliminaries}\label{sec:2}
	Throughout the paper, scalars are denoted by lowercase letters (e.g., $x$), vectors by bold lowercase letters (e.g., $\mathbf{x}$), matrices by bold uppercase letters (e.g., $\mathbf{X}$), and tensors by calligraphic letters (e.g., $\mathcal{X}$). $\mathbf{I}$ and $\mathbf{I}_n$ denote the identity matrix of appropriate size and of size $n\times n$, respectively. $(\cdot)^\top$ denotes the transpose of a matrix or vector. $\nabla$ denotes the discrete gradient operator, and $\nabla^\top$ denotes its adjoint operator. For a matrix $\mathbf{X}$ or a tensor $\mathcal{X}$, $\|\cdot\|_F$ denotes the Frobenius norm, and $\|\cdot\|_1$ denotes the entrywise $\ell_1$ norm.

	Let $\mathcal{X}\in\mathbb{R}^{I_1\times I_2\times \cdots \times I_N}$ be an $N$th-order real-valued tensor. 
	For a fixed mode $n\in\{1,\ldots,N\}$, a mode-$n$ fiber of $\mathcal{X}$ is defined as \cite{kolda2009tensor}
	\begin{equation*}\small
		\begin{split}
			&\mathbf{x}_{i_1,\ldots,i_{n-1},i_{n+1},\ldots,i_N}\\
			&=
			\mathcal{X}(i_1,\ldots,i_{n-1},:\,,i_{n+1},\ldots,i_N)\in \mathbb{R}^{I_n},
		\end{split}
	\end{equation*}
	where all indices except the $n$th one are fixed. Given a matrix $\mathbf{A}\in\mathbb{R}^{J\times I_n}$, the mode-$n$ product 
	$\mathcal{Y}:=\mathcal{X}\times_n \mathbf{A}$ 
	is defined by left-multiplying each mode-$n$ fiber of $\mathcal{X}$ \cite{kolda2009tensor}, \emph{i.e.},
	\begin{equation*}\small
		\begin{split}
			&\mathcal{Y}(i_1,\ldots,i_{n-1},:\,,i_{n+1},\ldots,i_N)\\
			&=
			\mathbf{A}\,
			\mathcal{X}(i_1,\ldots,i_{n-1},:\,,i_{n+1},\ldots,i_N).
		\end{split}
	\end{equation*}
	The inner product between two tensors $\mathcal{X}$ and $\mathcal{Y}$ is defined as
	\begin{equation*}\small
		\langle \mathcal{X},\mathcal{Y}\rangle
		:=
		\sum_{i_1,\ldots,i_N}\mathcal{X}(i_1,\ldots,i_N)\mathcal{Y}(i_1,\ldots,i_N),
	\end{equation*}
	and $\|\mathcal{X}\|_F:=\sqrt{\langle \mathcal{X},\mathcal{X}\rangle}$ denotes the Frobenius norm of $\mathcal{X}$.
	
	
	Given a third-order tensor $\mathcal{X}\in\mathbb{R}^{I_1\times I_2\times I_3}$ and $h=\min(I_1,I_2)$, 
	the Logarithmic Schatten-$p$ norm of $\mathcal{X}$ under the t-SVD framework \cite{kilmer2013third}, denoted as 
	$\|\mathcal{X}\|_{L,S_p}$ with $0<p\leq 1$, is defined as \cite{guo2022logarithmic}
	\begin{equation}\small
		\begin{split}
			\|\mathcal{X}\|_{L,S_p}
			:=&
			\frac{1}{I_3}\sum_{k=1}^{I_3}\|\mathbf{X}_f^{(k)}\|_{L,S_p}\\
			=&
			\frac{1}{I_3}\sum_{k=1}^{I_3}\sum_{i=1}^{h}
			\log\big(1+(\mathbf{S}_f^{(k)}(i,i))^p\big),
		\end{split}
		\label{eq:tls_p}
	\end{equation}
	where $\mathbf{X}_f^{(k)}$ denotes the $k$th frontal slice of $\mathcal{X}$ in the Fourier domain,  and $
	\mathbf{X}_f^{(k)}=\mathbf{U}_f^{(k)}\mathbf{S}_f^{(k)}\bigl(\mathbf{V}_f^{(k)}\bigr)^{\top}
	$
	is the singular value decomposition of $\mathbf{X}_f^{(k)}$, with $\mathbf{S}_f^{(k)}$ being a diagonal matrix whose diagonal entries $\mathbf{S}_f^{(k)}(i,i)$ are the singular values of $\mathbf{X}_f^{(k)}$. 
	
	The Logarithmic Schatten-$p$ norm provides a tighter nonconvex surrogate to the tensor rank than the tensor nuclear norm (TNN) \cite{lu2019tensor, zhou2019tensor} by penalizing singular values in a nonlinear and non-uniform manner. In particular, it suppresses small singular values more effectively while preserving dominant ones, thereby reducing information loss \cite{guo2022logarithmic}.
	
	\begin{definition}(Definitions of Several Total Variation (TV) Regularizers)
		For a matrix $\mathbf{A}\in\mathbb{R}^{H\times W}$,
		the spatial TV regularizer $\mathrm{TV}_{s}(\mathbf{A})$ is defined by
		\begin{equation}\small\label{tv1}
			\begin{split}
				\mathrm{TV}_{s}(\mathbf{A})&:=\|\nabla\mathbf{A}\|_1\\
				&:=\|\mathbf{D}_h\mathbf{A}\|_1+\|\mathbf{D}_w\mathbf{A}\|_1,
			\end{split}
		\end{equation}
		where $\mathbf{D}_h$ and $\mathbf{D}_w$ denote the forward finite difference operators along the horizontal and vertical directions, respectively. For a third-order tensor $\mathcal{A}\in\mathbb{R}^{H\times W \times 3}$ representing a color image, the spatial TV regularizer $\mathrm{TV}_{s}(\mathcal{A})$ is defined by
		\begin{equation}\small\label{tv2}
			\begin{split}
				\mathrm{TV}_{s}(\mathcal{A})&:=\|\nabla\mathcal{A}\|_1\\
				&:=\|\mathbf{D}_h\mathcal{A}\|_1+\|\mathbf{D}_w\mathcal{A}\|_1.
			\end{split}
		\end{equation}
		For a third-order tensor $\mathcal{A}\in\mathbb{R}^{H\times W \times B}$ representing a hyperspectral image, the spatial--spectral TV regularizer $\mathrm{TV}_{s,s}(\mathcal{A})$ is defined by
		\begin{equation}\small\label{tv3}
			\begin{split}
				\mathrm{TV}_{s,s}(\mathcal{A})&:=\|\nabla\mathcal{A}\|_1\\
				&:=
				\|\mathbf{D}_h\mathcal{A}\|_1
				+\|\mathbf{D}_w\mathcal{A}\|_1
				+\|\mathbf{D}_b\mathcal{A}\|_1,
			\end{split}
		\end{equation}
		where $\mathbf{D}_b$ denotes the forward finite difference operators along the spectral direction.
		For a fourth-order tensor $\mathcal{A}\in\mathbb{R}^{H\times W \times 3\times T}$ representing a color video, the spatial--temporal TV regularizer $\mathrm{TV}_{s,t}(\mathcal{A})$ is defined by
		\begin{equation}\small\label{tv4}
			\begin{split}
				\mathrm{TV}_{s,t}(\mathcal{A})&:=\|\nabla\mathcal{A}\|_1\\
				&:=
				\|\mathbf{D}_h\mathcal{A}\|_1
				+\|\mathbf{D}_w\mathcal{A}\|_1
				+\|\mathbf{D}_t\mathcal{A}\|_1,
			\end{split}
		\end{equation}
		where $\mathbf{D}_t$ denotes the forward finite difference operators along the temporal direction.
	\end{definition}
	
	For other fundamental concepts and preliminaries related to tensors, the reader is referred to \cite{kolda2009tensor,kilmer2011factorization}.
	\section{Tensor Orthogonal Subspace Split}\label{sec:3}
	\begin{figure*}[htbp]
		\centering
		\includegraphics[width=14.2cm, height=3.2cm]{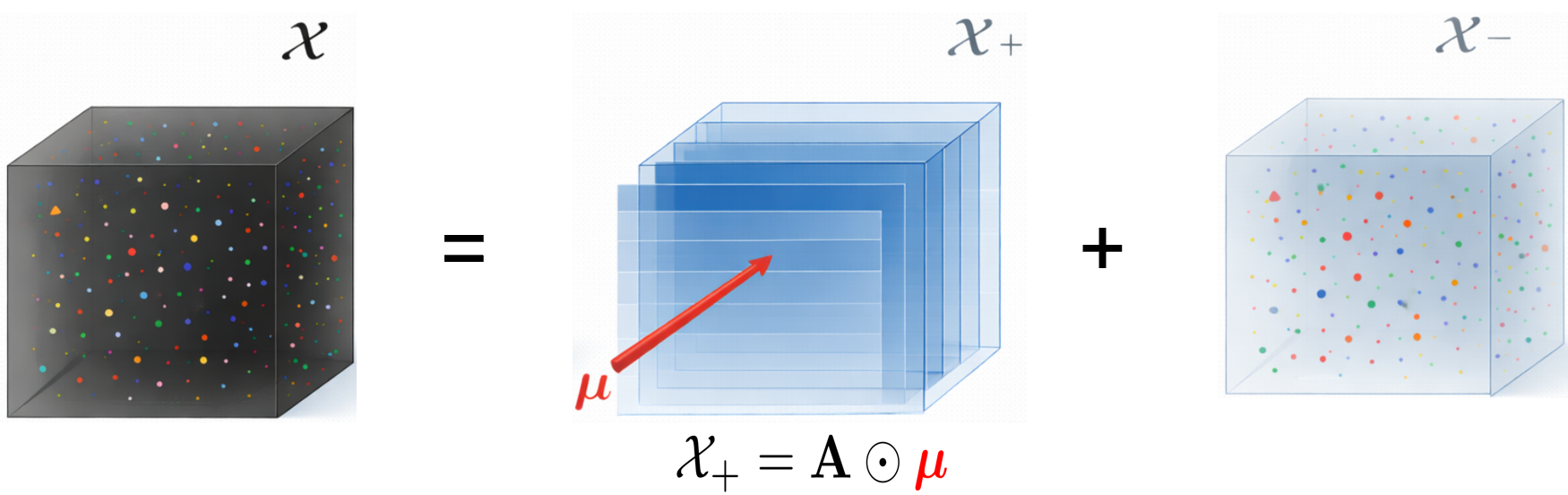}
		\caption{Illustration of the rank-one TOSS for a third-order tensor along the third mode. 
			The original tensor $\mathcal{X}$ is split into a dominant component $\mathcal{X}_+$ and a residual component $\mathcal{X}_-$ such that $\mathcal{X}=\mathcal{X}_+ + \mathcal{X}_-$. 
			The component $\mathcal{X}_+$ exhibits a rank-one structure along the third mode, where all mode-3 fibers are collinear and aligned with a common direction $\boldsymbol{\mu}$, \emph{i.e.}, 
			$\mathcal{X}_+(i,j,:) = a(i,j)\boldsymbol{\mu}$ for some coefficient matrix $\mathbf{A}=[a(i,j)]$. 
			This results in a stack of scaled frontal slices sharing the same spectral/temporal profile. 
			The residual component $\mathcal{X}_-$ contains variations orthogonal to $\boldsymbol{\mu}$, capturing localized perturbations, noise, and fine-scale structures that cannot be represented by the rank-one subspace.}
		\label{rank_one_TOSS_exam}
	\end{figure*}
	\subsection{General Subspace-Induced Tensor Orthogonal Split}
	Let $\mathfrak{U}\subset\mathbb{R}^{I_n}$ be an $r$-dimensional linear subspace with orthogonal complement 
	$\mathfrak{U}^\perp$, and let $\mathbf{U}\in\mathbb{R}^{I_n\times r}$ collect an orthonormal basis of $\mathfrak{U}$,
	\emph{i.e.}, $\operatorname{col}(\mathbf{U})=\mathfrak{U}$ with
	$\mathbf{U}^\top \mathbf{U} = \mathbf{I}_r$.
	
	The orthogonal projection matrix onto $\mathfrak{U}$ is given by
	\begin{equation}\small\label{definP}
		\mathbf{P} := \mathbf{U}\mathbf{U}^\top,
	\end{equation}
	which clearly satisfies $\mathbf{P}^2=\mathbf{P}$ and $\mathbf{P}^\top=\mathbf{P}$.
	\begin{proposition}[Orthogonal Subspace Split]
		For any vector $\mathbf{x}\in\mathbb{R}^{I_n}$, an orthogonal split (also referred to as an orthogonal decomposition in the sense of Hilbert spaces) holds:
		\begin{equation}\small
			\mathbf{x}
			=
			\underbrace{\mathbf{P}\mathbf{x}}_{\mathbf{x}_+}
			+
			\underbrace{(\mathbf{I}-\mathbf{P})\mathbf{x}}_{\mathbf{x}_-},
		\end{equation}
		where $\mathbf{x}_+\in\mathfrak{U}$, $\mathbf{x}_-\in\mathfrak{U}^\perp$, and
		$\langle \mathbf x_+, \mathbf x_- \rangle=0$.
	\end{proposition}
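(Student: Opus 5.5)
The plan is to verify the three claims directly from the definition $\mathbf{P}=\mathbf{U}\mathbf{U}^\top$ in \eqref{definP}, using only $\mathbf{U}^\top\mathbf{U}=\mathbf{I}_r$ and the consequences $\mathbf{P}^2=\mathbf{P}$ and $\mathbf{P}^\top=\mathbf{P}$ noted right after it.

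\emph{The identity.} The equality $\mathbf{x}=\mathbf{P}\mathbf{x}+(\mathbf{I}-\mathbf{P})\mathbf{x}$ holds trivially by linearity, so the only content lies in the membership and orthogonality statements.

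\emph{Membership of $\mathbf{x}_+$.} Write $\mathbf{x}_+=\mathbf{U}(\mathbf{U}^\top\mathbf{x})$. This is a linear combination of the columns of $\mathbf{U}$, so it lies in $\operatorname{col}(\mathbf{U})=\mathfrak{U}$.

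\emph{Membership of $\mathbf{x}_-$.} I will show that $\mathbf{u}^\top\mathbf{x}_-=0$ for every $\mathbf{u}\in\mathfrak{U}$. Any such $\mathbf{u}$ can be written $\mathbf{u}=\mathbf{U}\mathbf{c}$, so
\begin{equation*}
\mathbf{u}^\top(\mathbf{I}-\mathbf{P})\mathbf{x}=\mathbf{c}^\top(\mathbf{U}^\top-\mathbf{U}^\top\mathbf{U}\mathbf{U}^\top)\mathbf{x}=\mathbf{c}^\top(\mathbf{U}^\top-\mathbf{U}^\top)\mathbf{x}=0,
\end{equation*}
using $\mathbf{U}^\top\mathbf{U}=\mathbf{I}_r$. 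Hence $\mathbf{x}_-\in\mathfrak{U}^\perp$.

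\emph{Orthogonality.} $\langle\mathbf{x}_+,\mathbf{x}_-\rangle=0$ then follows immediately, since $\mathbf{x}_+\in\mathfrak{U}$ and $\mathbf{x}_-\in\mathfrak{U}^\perp$. Equivalently, $\mathbf{x}^\top\mathbf{P}^\top(\mathbf{I}-\mathbf{P})\mathbf{x}=\mathbf{x}^\top(\mathbf{P}-\mathbf{P}^2)\mathbf{x}=0$.

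As a remark, I would add that the split is unique: if $\mathbf{x}=\mathbf{a}+\mathbf{b}$ with $\mathbf{a}\in\mathfrak{U}$ and $\mathbf{b}\in\mathfrak{U}^\perp$, then $\mathbf{a}-\mathbf{x}_+=\mathbf{x}_--\mathbf{b}$ lies in $\mathfrak{U}\cap\mathfrak{U}^\perp=\{\mathbf{0}\}$. This justifies calling it \emph{the} orthogonal decomposition in the Hilbert-space sense.

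There is no genuine obstacle here. The only point needing care is the membership $\mathbf{x}_-\in\mathfrak{U}^\perp$: orthogonality must be checked against all of $\mathfrak{U}$, not just against $\mathbf{x}_+$, and this is exactly where $\mathbf{U}^\top\mathbf{U}=\mathbf{I}_r$ is used.
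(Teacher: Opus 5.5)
Your proof is correct and follows the paper's route: the orthogonality claim comes from $\mathbf{x}^\top\mathbf{P}^\top(\mathbf{I}-\mathbf{P})\mathbf{x}=\mathbf{x}^\top(\mathbf{P}-\mathbf{P}^2)\mathbf{x}=0$, exactly as in the paper. The one difference is that you verify $\mathbf{x}_+\in\mathfrak{U}$ and $\mathbf{x}_-\in\mathfrak{U}^\perp$ directly from $\mathbf{U}^\top\mathbf{U}=\mathbf{I}_r$, whereas the paper only cites the classical orthogonal projection theorem for these, so your version is more self-contained.
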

	\begin{proof} 
		Since $\mathbf P$ is an orthogonal projector, we have
		$\mathbf P(\mathbf I-\mathbf P)=\mathbf 0$, which implies
		$\langle \mathbf x_+, \mathbf x_- \rangle
		= \mathbf x^\top \mathbf P(\mathbf I-\mathbf P)\mathbf x = 0$. Actually, such an orthogonal split follows directly from the classical orthogonal projection theorem in linear algebra \cite{kreyszig1991introductory,Horn2012MatrixAnalysis}.
	\end{proof}
	
	The above orthogonal subspace split is defined for vectors in $\mathbb{R}^{I_n}$.
	When a tensor $\mathcal{X} \in \mathbb{R}^{I_1 \times \cdots \times I_N}$ is considered, each mode-$n$ fiber of $\mathcal{X}$ can be naturally viewed as a vector in $\mathbb{R}^{I_n}$. Applying the same orthogonal projector $\mathbf{P}_{n}$ to all mode-$n$ fibers therefore induces a consistent, fiber-wise orthogonal subspace split of the entire tensor. In other words, the vector-level subspace-induced split can be lifted to the tensor domain by acting independently yet coherently along a prescribed mode. This observation motivates the following definition of the
	\emph{Tensor Orthogonal Subspace Split (TOSS)}.
	
	\begin{definition}[TOSS]\label{toss}
		Given a tensor $\mathcal{X}$ and a subspace $\mathfrak{U}_n$ associated with a prescribed mode (e.g., mode-$n$),
		the \emph{TOSS} of $\mathcal{X}$ is defined as
		\begin{equation}\small
			\mathcal{X}_+ := \mathcal{X}\times_n \mathbf{P}_n, \qquad
			\mathcal{X}_- := \mathcal{X}\times_n (\mathbf{I}-\mathbf{P}_n),
		\end{equation}
		where $\mathbf{P}_n$ is the orthogonal projection matrix onto $\mathfrak{U}_n$.
	\end{definition}
	It follows immediately that
	\begin{equation}\small\label{split1}
		\mathcal{X} = \mathcal{X}_+ + \mathcal{X}_-, 
	\end{equation}
	which we refer to as the exact reconstruction property of TOSS.
	\begin{remark}
		The prescribed mode $n$ is determined by the organization of the tensor data and the prior knowledge of the underlying structure. In general, $n$ should be chosen as the mode along which the fibers are expected to share a common dominant structural pattern. For example, for an HSI represented as $\mathcal{Y}\in\mathbb{R}^{H\times W\times B}$, where each frontal slice corresponds to one spectral band, the spectral mode is selected, i.e., $n=3$, since different bands usually share a common spatial structure. For a color video represented as $\mathcal{Y}\in\mathbb{R}^{H\times W\times 3\times T}$, the temporal mode is selected, i.e., $n=4$, because the background is expected to be temporally consistent across frames.
	\end{remark}

	While the subspace-induced split in (\ref{split1}) is formulated in terms of
	orthogonal projections, it is often beneficial to reinterpret such a
	split from an operator-theoretic perspective.
	To this end, following the standard relation
	between an orthogonal projection and the associated reflection
	operator \cite{strang2022introduction}, we define
	\begin{equation}\small\label{definR}
		\mathbf{R}_{n}:=2\mathbf{P}_{n}-\mathbf{I},
	\end{equation}
	and recall the following standard proposition.
	\begin{proposition}\cite{strang2022introduction}\label{prop2}
		The matrix $\mathbf{R}_n$ is a symmetric involution, i.e.,
		\begin{equation}\small
			\mathbf{R}_{n}^\top = \mathbf{R}_{n}, \qquad \mathbf{R}_{n}^2 = \mathbf{I}.
		\end{equation}
		Moreover, $\mathbf{R}_{n}$ has eigenvalue $1$ on $\mathfrak{U}_{n}$ and $-1$ on $\mathfrak{U}_{n}^\perp$.
	\end{proposition}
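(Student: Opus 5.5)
The plan is to derive everything from the two defining properties of the orthogonal projector $\mathbf{P}_n$ recorded after (\ref{definP}), namely $\mathbf{P}_n^\top=\mathbf{P}_n$ and $\mathbf{P}_n^2=\mathbf{P}_n$, together with the definition (\ref{definR}), $\mathbf{R}_n=2\mathbf{P}_n-\mathbf{I}$. No further structure of the subspace $\mathfrak{U}_n$ should be needed beyond the fact that $\mathbf{P}_n$ acts as the identity on $\mathfrak{U}_n$ and as zero on $\mathfrak{U}_n^\perp$.

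\textbf{Symmetry.} Transposing (\ref{definR}) gives $\mathbf{R}_n^\top=2\mathbf{P}_n^\top-\mathbf{I}=2\mathbf{P}_n-\mathbf{I}=\mathbf{R}_n$, using $\mathbf{P}_n^\top=\mathbf{P}_n$. Alternatively, writing $\mathbf{P}_n=\mathbf{U}\mathbf{U}^\top$ makes the symmetry evident directly.

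\textbf{Involution.} Expanding the square and using idempotence,
\begin{equation*}\small
\mathbf{R}_n^2=4\mathbf{P}_n^2-4\mathbf{P}_n+\mathbf{I}=4\mathbf{P}_n-4\mathbf{P}_n+\mathbf{I}=\mathbf{I}.
\end{equation*}
The only ingredient is $\mathbf{P}_n^2=\mathbf{P}_n$, which itself follows from $\mathbf{U}^\top\mathbf{U}=\mathbf{I}_r$.

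\textbf{Eigenstructure.} First take $\mathbf{x}\in\mathfrak{U}_n$. Then $\mathbf{x}=\mathbf{U}\mathbf{c}$ for some $\mathbf{c}$, so $\mathbf{P}_n\mathbf{x}=\mathbf{U}\mathbf{U}^\top\mathbf{U}\mathbf{c}=\mathbf{x}$, and hence $\mathbf{R}_n\mathbf{x}=2\mathbf{x}-\mathbf{x}=\mathbf{x}$. Next take $\mathbf{x}\in\mathfrak{U}_n^\perp$. Then $\mathbf{U}^\top\mathbf{x}=\mathbf{0}$, so $\mathbf{P}_n\mathbf{x}=\mathbf{0}$ and $\mathbf{R}_n\mathbf{x}=-\mathbf{x}$.

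Combined with the Orthogonal Subspace Split proposition, $\mathbb{R}^{I_n}=\mathfrak{U}_n\oplus\mathfrak{U}_n^\perp$. Therefore $\mathbf{R}_n$ acts on any vector as $\mathbf{R}_n\mathbf{x}=\mathbf{x}_+-\mathbf{x}_-$. This shows that the eigenvalues $\pm1$ exhaust the spectrum, with eigenspaces exactly $\mathfrak{U}_n$ and $\mathfrak{U}_n^\perp$.

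There is no real obstacle here. The only point needing care is the degenerate cases $r=0$ or $r=I_n$, where one of the two eigenspaces is trivial; the statement should then be read vacuously on that space.
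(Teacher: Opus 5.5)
Your argument is correct and complete. Symmetry follows from $\mathbf{P}_n^\top=\mathbf{P}_n$, the involution from $\mathbf{P}_n^2=\mathbf{P}_n$, and the $\pm1$ actions from $\mathbf{P}_n\mathbf{x}=\mathbf{x}$ on $\mathfrak{U}_n$ and $\mathbf{U}^\top\mathbf{x}=\mathbf{0}$ on $\mathfrak{U}_n^\perp$. The paper gives no proof of its own here; it only recalls the result as standard with a citation to Strang, and your derivation is the textbook verification being referenced (your extra remark that the eigenspaces are exactly $\mathfrak{U}_n$ and $\mathfrak{U}_n^\perp$ goes slightly beyond the statement and is also correct).
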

	
	The proposition \ref{prop2} shows that $\mathbf{R}_{n}$ acts as a reflection operator with respect to the subspace $\mathfrak{U}_{n}$: it leaves fibers in $\mathfrak{U}_{n}$ unchanged and flips the sign of fibers in $\mathfrak{U}_{n}^{\perp}$. Hence, $\mathbf{R}_{n}$ provides a spectral characterization of the orthogonal split between the structured dominant component and its complementary residual part.
	
	From \eqref{definR}, noting that
	\begin{equation}\small
		\mathbf{P}_{n} = \frac{1}{2}(\mathbf{I}+\mathbf{R}_{n}), \qquad
		\mathbf{I}-\mathbf{P}_{n} = \frac{1}{2}(\mathbf{I}-\mathbf{R}_{n}),
	\end{equation}
	the TOSS in Definition \ref{toss} admits the following equivalent representation:
	\begin{equation}\label{pm1}\small
		\small
		\left\{
		\begin{aligned}
			\mathcal{X}_{+}
			&=
			\frac{1}{2}
			\left(
			\mathcal{X}
			+
			\mathcal{X}\times_{n}\mathbf{R}_{n}
			\right), \\
			\mathcal{X}_{-}
			&=
			\frac{1}{2}
			\left(
			\mathcal{X}
			-
			\mathcal{X}\times_{n}\mathbf{R}_{n}
			\right).
		\end{aligned}
		\right.
	\end{equation}
	
	Besides the exact reconstruction property in \eqref{split1}, TOSS possesses the following basic properties as well.
	
	\begin{property}[Fiber-wise Orthogonality]\label{Pro1}
		Define the mode-$n$ fiber-wise inner-product tensor
		\begin{equation*}\small
			\langle \mathcal{X}_{+},\mathcal{X}_{-}\rangle_{(n)}
			\in
			\mathbb{R}^{I_1\times\cdots\times I_{n-1}\times I_{n+1}\times\cdots\times I_N}
		\end{equation*}
		by
		\begin{equation*}\small
			\begin{aligned}
				&\bigl(\langle \mathcal{X}_{+},\mathcal{X}_{-}\rangle_{(n)}\bigr)_{i_1,\ldots,i_{n-1},i_{n+1},\ldots,i_N}\\
				&=
				\bigl\langle
				\mathcal{X}_{+}(i_1,\ldots,i_{n-1},:,i_{n+1},\ldots,i_N),\\
				&\qquad
				\mathcal{X}_{-}(i_1,\ldots,i_{n-1},:,i_{n+1},\ldots,i_N)
				\bigr\rangle .
			\end{aligned}
		\end{equation*}
		Then
		\begin{equation}\small
			\langle \mathcal{X}_{+},\mathcal{X}_{-}\rangle_{(n)}=\mathbf{0}.
			\label{eq:fiberwise_orthogonality_tensor}
		\end{equation}
		That is, for every fixed $(i_1,\ldots,i_{n-1},i_{n+1},\ldots,i_N)$, the corresponding mode-$n$ fibers of $\mathcal{X}_{+}$ and $\mathcal{X}_{-}$ are orthogonal.
	\end{property}
	
	\begin{property}[Energy Preservation]
		The Frobenius norm of $\mathcal{X}$ satisfies
		\begin{equation}\small
			\|\mathcal{X}\|_F^2
			=
			\|\mathcal{X}_+\|_F^2
			+
			\|\mathcal{X}_-\|_F^2.
		\end{equation}
	\end{property}
	
	\begin{proof}
		By construction, $\mathcal{X}=\mathcal{X}_{+}+\mathcal{X}_{-}$. Hence,
		\begin{equation*}\small
			\|\mathcal{X}\|_F^2
			=
			\|\mathcal{X}_{+}+\mathcal{X}_{-}\|_F^2
			=
			\|\mathcal{X}_{+}\|_F^2+\|\mathcal{X}_{-}\|_F^2
			+2\langle \mathcal{X}_{+},\mathcal{X}_{-}\rangle .
		\end{equation*}
		Moreover, the Frobenius inner product $\langle \mathcal{X}_{+},\mathcal{X}_{-}\rangle$
		is the sum of the entries of the mode-$n$ fiber-wise inner-product tensor
		$\langle \mathcal{X}_{+},\mathcal{X}_{-}\rangle_{(n)}$. By  Property~\ref{Pro1}, $
		\langle \mathcal{X}_{+},\mathcal{X}_{-}\rangle_{(n)}=\mathbf{0}$,
		and therefore $\langle \mathcal{X}_{+},\mathcal{X}_{-}\rangle=0$. Substituting this into the above identity yields the desired result.
	\end{proof}
	
	\begin{property}[Invariant/Anti-Invariant under $\mathbf{R}_n$]\label{propo:invar}
		The two components satisfy
		\begin{equation}\small
			\mathcal{X}_+\times_n \mathbf{R}_n = \mathcal{X}_+,
			\qquad
			\mathcal{X}_-\times_n \mathbf{R}_n = -\mathcal{X}_-.
		\end{equation}
	\end{property}
	\begin{proof}
		Using associativity of the mode-$n$ product and the involutive property
		$\mathbf{R}^2=\mathbf{I}$, we have
		\begin{equation*}\small
			(\mathcal{X}\times_n\mathbf{R}_n)\times_n\mathbf{R}_{n}
			= \mathcal{X}\times_n(\mathbf{R}_{n}^2)
			= \mathcal{X}.
		\end{equation*}
		Then
		\begin{equation*}\small
			\begin{split}
				\mathcal{X}_{+}\times_n\mathbf{R}_n
				&= \tfrac12\big(\mathcal{X}+\mathcal{X}\times_n\mathbf{R}_n\big)\times_n\mathbf{R}_n \\
				&= \tfrac12\big(\mathcal{X}\times_n\mathbf{R}_n
				+(\mathcal{X}\times_n\mathbf{R}_n)\times_n\mathbf{R}_n\big) \\
				&= \tfrac12\big(\mathcal{X}\times_n\mathbf{R}_n+\mathcal{X}\big)
				= \mathcal{X}_{+}.
			\end{split}
		\end{equation*}
		
		Similarly,
		\begin{equation*}\small
			\begin{split}
				\mathcal{X}_{-}\times_n\mathbf{R}_n
				&= \tfrac12\big(\mathcal{X}-\mathcal{X}\times_n\mathbf{R}_n\big)\times_n\mathbf{R}_n \\
				&= \tfrac12\big(\mathcal{X}\times_n\mathbf{R}_n
				-(\mathcal{X}\times_n\mathbf{R}_n)\times_n\mathbf{R}_n\big) \\
				&= \tfrac12\big(\mathcal{X}\times_n\mathbf{R}_n-\mathcal{X}\big)
				= -\mathcal{X}_{-}.
			\end{split}
		\end{equation*}
		This completes the proof.
	\end{proof}
	
	\begin{remark}
		The invariance relations in Property \ref{propo:invar} indicate that the TOSS is induced by the involutive operator $\mathbf{R}_n$ along the mode-$n$ fibers. 
		The component $\mathcal{X}_+$ lies in the $+1$ eigenspace of $\mathbf{R}_n$ and characterizes dominant structures preserved under reflection, whereas $\mathcal{X}_-$ belongs to the $-1$ eigenspace and represents orthogonal fluctuations. This reveals that TOSS is not merely an algebraic decomposition, but a geometrically meaningful orthogonal subspace split. Such a structure enables a principled separation between the dominant structured component and the orthogonal residual component, thereby offering interpretability and flexibility for subsequent modeling, optimization, and applications.
	\end{remark}
	
	\subsection{Rank-One TOSS Induced by a Single Direction}
	In general, the number of dominant directions in TOSS should be chosen according to the specific application and data structure, and may also be determined in an adaptive manner. In what follows, we focus on an important special case where the underlying mode-$n$ subspace is one-dimensional, namely, it is spanned by a single dominant direction. This special case is referred to as \emph{rank-one TOSS}.
	
	Specifically, let $\boldsymbol{\mu} \in \mathbb{R}^{I_n}$ be a unit vector, \emph{i.e.},
	$\|\boldsymbol{\mu}\|_2 = 1$, and define the associated one-dimensional subspace
	\begin{equation*}\small
		\mathfrak{U}_{n}^{\boldsymbol{\mu}} := \operatorname{span}\{\boldsymbol{\mu}\} \subset \mathbb{R}^{I_n}.
	\end{equation*}
	The term \textit{rank-one} refers to the fact that the subspace $\mathfrak{U}_{n}^{\boldsymbol{\mu}}$ has dimension one, which will further induce a rank-one structure of the resulting tensor component along mode $n$.
	
	A distinctive property of the rank-one TOSS is that the structured dominant component shares
	a common direction along the splitting mode.
	Let $\mathcal{X}\in\mathbb{R}^{I_1\times\cdots\times I_N}$.
	For any fixed index tuple $(i_1,\ldots,i_{n-1},i_{n+1},\ldots,i_N)$, denote the
	corresponding mode-$n$ fiber by $\mathbf{x} := \mathcal{X}(i_1,\ldots,i_{n-1},:\,,i_{n+1},\ldots,i_N)\in\mathbb{R}^{I_n}$.
	The rank-one TOSS splits each fiber as
	\begin{equation}\small
		\mathbf{x}_+ = (\boldsymbol{\mu}^\top \mathbf{x})\,\boldsymbol{\mu},
		\qquad
		\mathbf{x}_- = \mathbf{x} - (\boldsymbol{\mu}^\top \mathbf{x})\,\boldsymbol{\mu}.
	\end{equation}
	As a consequence, all mode-$n$ fibers of $\mathcal{X}_+$ are collinear and lie in
	$\operatorname{span}\{\boldsymbol{\mu}\}$, implying that $\mathcal{X}_+$ admits a
	rank-one structure along mode $n$.
	
	From an operator viewpoint, the above fiber-wise split can be equivalently
	expressed using the orthogonal projector onto $\mathfrak{U}_{n}^{\boldsymbol{\mu}}$, defined by
	\begin{equation*}\small
		\mathbf{P}_{n}^{\boldsymbol{\mu}}:= \boldsymbol{\mu}\boldsymbol{\mu}^\top,
	\end{equation*}
	which satisfies $(\mathbf{P}_{n}^{\boldsymbol{\mu}})^2=\mathbf{P}_{n}^{\boldsymbol{\mu}}$ and $(\mathbf{P}_{n}^{\boldsymbol{\mu}})^\top=\mathbf{P}_{n}^{\boldsymbol{\mu}}$.
	The associated orthogonal involution induced by $\mathfrak{U}_{n}^{\boldsymbol{\mu}}$ is given by
	\begin{equation}\small\label{houslike}
		\mathbf{R}_{n}^{\boldsymbol{\mu}} := 2\mathbf{P}_{n}^{\boldsymbol{\mu}} - \mathbf{I}
		= 2\boldsymbol{\mu}\boldsymbol{\mu}^\top - \mathbf{I}.
	\end{equation}
	Substituting this involution into the general $\pm$ split in Definition \ref{toss} yields the explicit tensor form
	\begin{equation}\small
		\left\{
		\begin{aligned}
			\mathcal{X}_+
			&=
			\frac{1}{2}\left(\mathcal{X} + \mathcal{X}\times_n \mathbf{R}_{n}^{\boldsymbol{\mu}}\right),\\
			\mathcal{X}_-
			&=
			\frac{1}{2}\left(\mathcal{X} - \mathcal{X}\times_n \mathbf{R}_{n}^{\boldsymbol{\mu}}\right).
		\end{aligned}
		\right.
	\end{equation}
	Moreover, the associated involution \eqref{houslike}
	corresponds to a classical \textit{Householder reflection} with respect to the subspace $\mathfrak{U}_{n}^{\boldsymbol{\mu}}$, which naturally induces the symmetric splitting property of TOSS.
	
	\begin{remark}
		The rank-one TOSS admits a clear geometric interpretation. Specifically, $\mathcal{X}_{+}$ captures the component of $\mathcal{X}$ whose mode-$n$ fibers are aligned with the dominant direction $\boldsymbol{\mu}$, which should be appropriately chosen according to the intrinsic structure of the given data, whereas $\mathcal{X}_{-}$ represents the orthogonal residual that accounts for deviations from this direction. When applied fiber-wise to tensors, rank-one TOSS provides a principled and energy-preserving mechanism for separating consistent structures from orthogonal variations along a prescribed mode.
		
		Such a modeling viewpoint naturally arises in a variety of multidimensional data (as shown in Fig. \ref{TOSS_exam}). For a color image, the RGB channels usually exhibit highly consistent spatial structures, sharing common edges, contours, and object shapes; this cross-channel structural consistency is well characterized by $\mathcal{X}_{+}$, while sensor noise, illumination changes, color fluctuations, or compression artifacts are more naturally absorbed into $\mathcal{X}_{-}$. For a hyperspectral image, although the number of spectral bands is much larger, different bands still commonly share similar spatial layouts, such as object boundaries, texture patterns, and region shapes; these consistent spatial structures are captured by $\mathcal{X}_{+}$, whereas band-dependent noise, stripe corruption, or local anomalous responses are described by $\mathcal{X}_{-}$. For a video sequence with a relatively stable background, frames typically maintain strong consistency along the temporal mode, so that the persistent background is represented by $\mathcal{X}_{+}$, while moving foreground objects, transient occlusions, and temporal disturbances are contained in $\mathcal{X}_{-}$.
	\end{remark}

	\section{Application Examples of Rank-One TOSS}\label{sec:4}
	The proposed TOSS provides a general and flexible framework for splitting high-dimensional tensor data into two orthogonal components along a prescribed mode, guided by an underlying subspace structure. 
	This splitting mechanism is particularly suitable for data whose variability along a given mode is dominated by consistent latent factors, while the remaining variations correspond to residual, noise, or localized structures. 
	Such characteristics are commonly observed in hyperspectral imaging, video analysis, medical imaging \cite{sedighin2024tensor,liu2023survey}, and multi-sensor data fusion \cite{khaleghi2013multisensor}, where different physical or semantic components are naturally entangled along specific tensor modes.
	
	Among the resulting special cases, rank-one TOSS is especially well suited to hyperspectral images and videos with relatively stable backgrounds, as these data often exhibit strong coherence governed by a single dominant direction. Accordingly, this section considers hyperspectral image restoration and color video background modeling as two representative applications to verify the effectiveness of rank-one TOSS, for which corresponding optimization models and efficient solution algorithms are developed.

	\subsection{Hyperspectral Image Restoration}
	
	Let $\mathcal{Y}\in\mathbb{R}^{H\times W\times B}$ denote an observed noise hyperspectral image, where $B$ is the number of spectral bands. The observation is modeled as
	\begin{equation}\small
		\mathcal{Y}=\mathcal{X}+\mathcal{N}+\mathcal{S},
		\label{eq:hsi_observation}
	\end{equation}
	where $\mathcal{X}\in\mathbb{R}^{H\times W\times B}$ denotes the underlying clean hyperspectral image, $\mathcal{N}$ represents dense noise corruption, and $\mathcal{S}$ denotes sparse corruption such as impulse noise, stripe artifacts, or dead pixels.
	
	Using the rank-one TOSS along the spectral mode (mode-$3$), we split $\mathcal{X}$ as $\mathcal{X} = \mathcal{X}_+ + \mathcal{X}_-$, where $\mathcal{X}_+$ represents the dominant spectral component whose mode-$3$ fibers are aligned with a single spectral direction, $\mathcal{X}_-$ captures the orthogonal residual variations.
	\begin{remark}
		Under the rank-one TOSS, the dominant component $\mathcal{X}_+$ admits a separable representation along the spectral mode.
		Specifically, for each spatial location $(i,j)$, the spectral fiber satisfies
		\begin{equation*}\small
			\mathcal{X}_+(i,j,:) = a(i,j)\,\boldsymbol{\mu}\footnote{For hyperspectral image denoising, the direction vector $\boldsymbol{\mu}$ is determined in a data-adaptive manner by principal component analysis (PCA). Specifically, the observed HSI is first reshaped along the spectral mode, and the first principal component is used as the dominant spectral direction. The resulting vector is then normalized to obtain $\boldsymbol{\mu}$. This choice reflects the main spectral variation of the given HSI and is fixed throughout the subsequent optimization process.},
		\end{equation*}
		where $a(i,j)=\boldsymbol{\mu}^\top\mathcal{X}(i,j,:)$ denotes the projection coefficient.
		Equivalently, in tensor form,
		\begin{equation*}\small
			\mathcal{X}_+ = \mathcal{M}(\mathbf{A}):=\mathbf{A}\odot\boldsymbol{\mu},
		\end{equation*}
		with $\mathbf{A}\in\mathbb{R}^{H\times W}$ being the spatial coefficient map, where $\odot$ denotes elementwise multiplication after broadcasting $\mathbf{A}$ along the third mode (i.e., if $\boldsymbol{\mu}\in\mathbb{R}^B$, then $\mathcal{X}_+(:,:,b)=\mu_b\,\mathbf{A}$).
	\end{remark}
	Therefore, the observation model (\ref{eq:hsi_observation}) can be rewritten as
	\begin{equation}\small
		\mathcal{Y}=\mathcal{M}(\mathbf{A})+\mathcal{X}_{-}+\mathcal{N}+\mathcal{S}.
	\end{equation}
	
	Under the rank-one TOSS, $\mathbf{A}$ characterizes the principal spatial structure associated with the common spectral direction $\boldsymbol{\mu}$. Since this component is expected to encode the main piecewise-smooth spatial content of the hyperspectral image, including major object regions, boundaries, and slowly varying intensity patterns, a spatial total variation regularizer $\mathrm{TV}_s(\mathbf{A})$ is imposed to promote spatial regularity while preserving essential structural edges. The complementary component $\mathcal{X}_-$ collects the residual spectral--spatial information that cannot be represented by the dominant rank-one consistent part. Rather than being treated as purely random noise, $\mathcal{X}_-$ is regarded as a structured complementary component that may contain fine details, local variations, and non-dominant spectral responses. Owing to the strong correlation and redundancy that still persist across its spatial and spectral dimensions, $\mathcal{X}_-$ is regularized by the logarithmic Schatten-$p$ norm to capture its global low-rank structure, together with the $\mathrm{TV}_{s,s}$ regularizer to enforce local smoothness and piecewise continuity in both the spatial dimensions and the spectral direction. In addition, the sparse corruption term $\mathcal{S}$ is introduced to account for impulsive degradations, stripe noise, and other sparse outliers. Consequently, the following unified hyperspectral image restoration model is proposed:
	
	\begin{equation}\small
		\begin{aligned}
			\min_{\mathbf{A},\,\mathcal{X}_-,\,\mathcal{S}} \quad 
			& \frac{1}{2}\big\|\mathcal{M}(\mathbf{A}) + \mathcal{X}_- + \mathcal{S} - \mathcal{Y} \big\|_F^2  \\
			& + \lambda_1 \, \mathrm{TV}_s(\mathbf{A})
			+ \lambda_2\, \|\mathcal{X}_-\|_{L,S_p}\\
			&+ \lambda_3 \, \mathrm{TV}_{s,s}(\mathcal{X}_-)
			+ \lambda_4 \, \|\mathcal{S}\|_1 ,
		\end{aligned}
		\label{eq:hsi_model}
	\end{equation}
	where $\lambda_1,\lambda_2,\lambda_3,\lambda_4>0$ are regularization parameters, $\|\mathcal{X}_-\|_{L,S_p}$, $\mathrm{TV}_s(\mathbf{A})$, and $\mathrm{TV}_{s,s}(\mathcal{X}_-)$ are defined as in (\ref{eq:tls_p}), (\ref{tv1}), and (\ref{tv3}), respectively.
	
	The derivation of the optimization algorithm for model~\eqref{eq:hsi_model} is presented in Section \uppercase\expandafter{\romannumeral 1} of the  \textit{Supplementary Materials}.
	
	\subsection{Color Video Background Modeling}
	Let $\mathcal{X}\in\mathbb{R}^{H\times W\times 3\times T}$ denote a color video with a stable background, where $T$ represents the number of temporal frames. Using the rank-one TOSS along the temporal mode (mode-$4$), we split $\mathcal{X}$ as $\mathcal{X}=\mathcal{X}_{+}+\mathcal{X}_{-}$, where $\mathcal{X}_{+}$ represents the background component whose mode-$4$ fibers are aligned with a single temporal direction, while $\mathcal{X}_{-}$ captures the foreground objects and other temporally varying residual components.
	\begin{remark}
		Under the rank-one TOSS along the temporal mode, the background component $\mathcal{X}_{+}$ admits a separable representation along the fourth mode. Specifically, for each spatial-color location $(i,j,k)$, the mode-4 fiber satisfies
		\begin{equation*}\small
			\mathcal{X}_{+}(i,j,k,:) = a(i,j,k)\,\boldsymbol{\mu}\footnote{For color video background modeling, we adopt a particularly simple yet effective choice by setting $\boldsymbol{\mu}$ as a uniform vector along the temporal mode. This choice can be interpreted as projecting onto the mean temporal direction of the video frames, which captures the dominant consistent background component shared across frames.},
		\end{equation*}
		where $a(i,j,k)=\boldsymbol{\mu}^{\top}\mathcal{X}(i,j,k,:)$ denotes the projection coefficient. Equivalently, in tensor form,
		\begin{equation*}\small
			\mathcal{X}_{+}=\mathcal{M}(\mathcal{A}) := \mathcal{A}\odot \boldsymbol{\mu},
		\end{equation*}
		with $\mathcal{A}\in\mathbb{R}^{H\times W\times 3}$ being the spatial-color coefficient tensor, where $\odot$ denotes elementwise multiplication after broadcasting $\mathcal{A}$ along the fourth mode (i.e., if $\boldsymbol{\mu}\in\mathbb{R}^{T}$, then $\mathcal{X}_{+}(:,:,:,t)=\mu_t\,\mathcal{A}$). 
	\end{remark}
	
	Although directly applying rank-one TOSS to $\mathcal{X}$ can already produce a relatively satisfactory background estimate for some simple video sequences, as illustrated in the third row of Fig. \ref{TOSS_exam}, such a direct split is still limited in more challenging scenarios. In particular, when foreground motions are more complex or the video contains various temporal fluctuations, the obtained background component may be contaminated by moving objects and residual disturbances. To achieve a more accurate and robust background modeling, it is therefore desirable to further incorporate appropriate structural priors into an optimization framework. Specifically, the background component $\mathcal{M}(\mathcal{A})$ is expected to be spatially smooth and structurally consistent, which motivates the use of the spatial TV regularization $\mathrm{TV}_{s}(\mathcal{A})$. Meanwhile, the residual component $\mathcal{X}_{-}$, mainly corresponding to moving foreground objects and other temporal variations, is often sparse in space while also exhibiting certain spatial-temporal continuity, which can be characterized by the $\ell_{1}$ norm and the spatial-temporal TV regularization $\mathrm{TV}_{s,t}(\mathcal{X}_{-})$. Based on these considerations, we consider the following optimization model:
	\begin{equation}\small\label{eq:video_model}
		\begin{split}
			\min_{\mathcal{A},\,\mathcal{X}_{-}}&
			\frac{1}{2}\left\|\mathcal{M}(\mathcal{A})+\mathcal{X}_{-}-\mathcal{X}\right\|_{F}^{2}
			+\lambda_{1}\,\mathrm{TV}_{s}(\mathcal{A})\\
			&+\lambda_{2}\left\|\mathcal{X}_{-}\right\|_{1}
			+\lambda_{3}\,\mathrm{TV}_{s,t}(\mathcal{X}_{-}),
		\end{split}
	\end{equation}
	where $\lambda_{1},\lambda_{2},\lambda_{3}>0$ are regularization parameters, $\mathrm{TV}_{s}(\mathcal{A})$, and $\mathrm{TV}_{s,t}(\mathcal{X}_-)$ are defined as in (\ref{tv2}) and (\ref{tv4}), respectively.
	
	The derivation of the optimization algorithm for model~\eqref{eq:video_model} is presented in Section \uppercase\expandafter{\romannumeral 2} of the  \textit{Supplementary Materials}.
	
	\section{Experimental Results}\label{sec:5}
	In this section, we thoroughly validate the effectiveness and superiority of the proposed method in two practical real-world applications: hyperspectral image restoration and color video background modeling. All experiments are run in MATLAB 2024b
	under Windows 11 on a personal computer with a 3.90GHz CPU and 64GB of memory.
	
	\subsection{Experiments on Hyperspectral Image Restoration}
	
	To validate the effectiveness of the proposed method, three widely used hyperspectral image (HSI) datasets, namely \textit{Pavia}, \textit{DCMall}, and \textit{RemoteImage}, are adopted in the experiments. For a fair comparison, all datasets are preprocessed so that each data sample has a unified size of $200 \times 200 \times 80$.
	
	To comprehensively evaluate the robustness of different methods, four noise configurations are considered in this study. Specifically, Gaussian noise (denoted as G) and salt-and-pepper noise (denoted as S) are introduced either individually or in combination. The detailed settings are summarized as follows:
	\begin{enumerate}
		\item Case 1 ($\text{G} = 0.1$): Gaussian noise with variance $0.1$ is added to the clean HSI.
		\item Case 2 ($\text{G} = 0.2$): Gaussian noise with variance $0.2$ is added.
		\item Case 3 ($\text{S} = 0.1, \text{G} = 0.01$): A mixed noise case consisting of salt-and-pepper noise (density $0.1$) and low-level Gaussian noise (variance $0.01$).
		\item Case 4 ($\text{S} = 0.1, \text{G} = 0.1$): A more challenging mixed noise scenario combining salt-and-pepper noise (density $0.1$) with moderate Gaussian noise (variance $0.1$).
	\end{enumerate}

	Before conducting the detailed comparison experiments, we first visualize the effect of the proposed rank-one TOSS mechanism under two representative degradation settings, as shown in Fig. \ref{fig.rankone_combined}. For the clean image without additive corruption, $\mathcal{X}_{+}$ captures the dominant structurally consistent information of the hyperspectral image, such as the main contours, boundaries, and spatial layouts, while $\mathcal{X}_{-}$ contains local details orthogonal to this dominant structure. When the image is corrupted by additive noise, $\mathcal{X}_{+}$ still preserves the principal structural information and is barely affected by noise, whereas most noise perturbations are absorbed into the residual component $\mathcal{X}_{-}$. This behavior agrees well with the theoretical motivation of rank-one TOSS. After restoration, the noise in $\mathcal{X}_{-}$ is effectively suppressed, leaving mainly local detail information complementary to $\mathcal{X}_{+}$. The final recovered image is then obtained by combining these two components, thereby achieving noise removal while preserving both dominant structures and meaningful local details.
	
	Four quantitative metrics, including PSNR, SSIM\cite{1284395}, FSIM\cite{5705575}, and ERGAS\cite{wald2002data}, are employed to comprehensively assess the reconstruction performance. Specifically, PSNR and SSIM are classical spatial-domain metrics that evaluate pixel-wise fidelity and structural similarity, respectively. FSIM further measures feature similarity by incorporating phase congruency and gradient information. In contrast, ERGAS is a spectral-domain metric tailored for hyperspectral images, which reflects the relative global reconstruction error across spectral bands. In general, higher values of PSNR, SSIM, and FSIM, along with a lower ERGAS value, indicate better reconstruction performance. The proposed method is compared with several representative approaches based on low-rank and sparse modeling, including SNN\cite{huang2015provable}, TNN\cite{lu2019tensor}, LRTV\cite{he2015total}, TLR-SSTV\cite{chen2018tensor}, TCTV\cite{WangHailin2023}, and SSCTV-RPCA\cite{GAO2025128885}. For our proposed method, the parameters are set as follows. Under Case 3, we set $\lambda_2 = 100$; for the other three noise scenarios, $\lambda_2 = 300$. The remaining parameters are fixed: $\lambda_1 = 0.01$, $\lambda_3 = 0.001$ and $\lambda_4 = 0.1$. For a fair comparison, the parameters of all competing methods are first set according to the corresponding original papers and then carefully fine-tuned to achieve their optimal performance on each dataset.
	
	The denoising performance metrics under different levels of Gaussian noise and under mixed noise scenarios (salt-and-pepper noise of density 0.1 combined with Gaussian noise) are presented in Tables \ref{table:hsirecover1} and \ref{table:hsirecover2}, respectively. Under pure Gaussian noise conditions ($\text{G}=0.1$ and $\text{G}=0.2$), the proposed method consistently outperforms all competing methods across all three datasets in terms of PSNR, SSIM, FSIM, and ERGAS. Notably, even when the noise level increases to $\text{G}=0.2$, the proposed method maintains the best performance, demonstrating strong robustness.
	
	Under the more challenging mixed-noise scenarios ($\text{S}=0.1, \text{G}=0.01$ and $\text{S}=0.1, \text{G}=0.1$), the advantages of our method become even more pronounced. For instance, on the RemoteImage dataset under the mixed noise of $\text{S}=0.1$ and $\text{G}=0.01$, our method outperforms the second-best method (TCTV) by $2.118$ dB in PSNR and reduces ERGAS by $19.83\%$. These results clearly indicate that our method effectively suppresses both sparse and Gaussian noise simultaneously, demonstrating a particular strength in preserving spectral fidelity.
	
	For visual comparison, we select multiple bands from each HSI to demonstrate the restoration performance of all competing methods. Specifically, Figs. \ref{fig.3} and \ref{fig.4} present the restored images under various noise conditions for the Pavia and DCMall datasets, respectively (showing four representative bands per dataset). Additionally, Fig. \ref{fig.5} shows the recovered RemoteImage HSI (band 25) obtained by all methods under the mixed-noise condition of $\text{S}=0.1$ and $\text{G}=0.01$. From these figures, it can be clearly observed that our proposed method achieves superior denoising performance. Specifically, compared with other competing methods, our results preserve sharper edges and more faithful local details. Moreover, TNN, LRTV, and TLR-SSTV suffer from evident color distortion under Gaussian noise, whereas our method successfully mitigates this problem, yielding higher color fidelity and further validating its superiority in spectral consistency preservation. 
	
	Notably, several competing methods compared above, including LRTV, TLR-SSTV, TCTV and SSCTV-RPCA, are also built upon total variation regularization. Specifically, LRTV and TLR-SSTV employ TV as a separate additive term to enforce local smoothness alongside low-rank priors, while TCTV and SSCTV-RPCA integrate TV-type regularization into a unified regularizer that jointly encodes low-rankness and local continuity. Despite their different implementations of TV, none of these methods introduce an explicit orthogonal splitting mechanism before applying TV constraints. Our method, which builds upon the proposed TOSS framework, consistently outperforms these TV-based baselines by a notable margin in both quantitative metrics and visual quality. This finding suggests that the performance gain cannot be attributed solely to the use of TV regularization. Rather, the advantage comes from our orthogonal splitting strategy, which decomposes the tensor into complementary components and allows TV to act on each component without mutual interference. This further demonstrates that the superiority of the proposed method stems primarily from the principled orthogonal splitting mechanism, rather than merely from the incorporation of TV regularization.
	
	Based on the above quantitative and visual comparisons, we further analyze the computational efficiency of the proposed method. Fig. \ref{ave_time} reports the average running time (in seconds) of all competing methods across three HSI datasets under different noise cases. It can be observed that our method achieves a competitive average runtime of 39.67 seconds, which is significantly lower than several representative methods such as SNN (55.38 s), TNN (75.66 s), TCTV (157.93 s), and SSCTV-RPCA (53.29 s). Although LRTV (7.00 s) and TLR-SSTV (20.28 s) are faster, their reconstruction quality is substantially inferior, as evidenced by the quantitative metrics and visual results discussed previously. This indicates that our method strikes a more favorable balance between computational cost and restoration accuracy. This computational advantage stems from the proposed TOSS framework. Unlike existing methods that apply low-rank or TV regularization directly to the original tensor, TOSS explicitly splits the tensor into a dominant component lying in a prescribed subspace and a residual component in its orthogonal complement. This orthogonal splitting allows each component to be optimized independently without mutual interference. This orthogonal splitting strategy leads to smoother convergence and faster per-iteration updates. In contrast, methods like TCTV and SSCTV-RPCA integrate TV into a unified low-rank formulation, resulting in more complex subproblems and slower runtime.
	
	\begin{figure*}[p]
		\centering
		\vspace*{-\topskip}
		\vspace{-1cm}
		
		\begin{minipage}[t]{0.48\textwidth}
			\centering
			\text{(a) Case 1 ($\text{G}=0.1$).}\\[4pt]
			\renewcommand{\arraystretch}{0.5}
			\setlength\tabcolsep{0.5pt}
			\begin{tabular}{ccc}
				\centering
				\includegraphics[width=26mm, height=22mm]{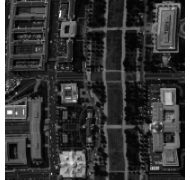} &
				\includegraphics[width=26mm, height=22mm]{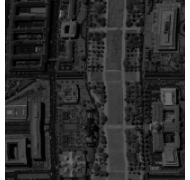}&
				\includegraphics[width=26mm,height=22mm]{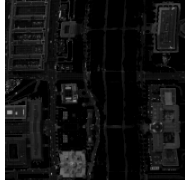} \\
				\scriptsize {DCMall} & \scriptsize $\mathcal{X}_+$ & \scriptsize $\mathcal{X}_-$ \\
				\includegraphics[width=26mm, height=22mm]{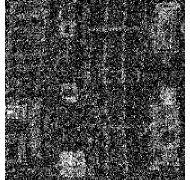} &
				\includegraphics[width=26mm, height=22mm]{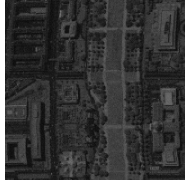} &
				\includegraphics[width=26mm, height=22mm]{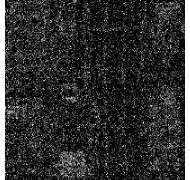} \\
				\scriptsize Noisy & \scriptsize $\mathcal{X}_+$ & \scriptsize $\mathcal{X}_-$ \\
				\includegraphics[width=26mm, height=22mm]{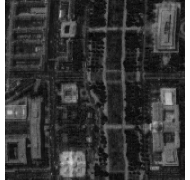} &
				\includegraphics[width=26mm, height=22mm]{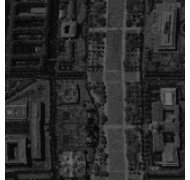} &
				\includegraphics[width=26mm, height=22mm]{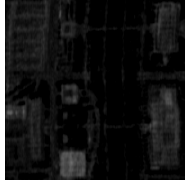} \\
				\scriptsize Recovered & \scriptsize $\mathcal{X}_+$ & \scriptsize $\mathcal{X}_-$ \\
			\end{tabular}
			\label{fig.rankone1}
		\end{minipage}
		\hfill
		\begin{minipage}[t]{0.48\textwidth}
			\centering
			\text{(b) Case 3 ($\text{S}=0.1, \text{G}=0.01$).}\\[4pt]
			\renewcommand{\arraystretch}{0.5}
			\setlength\tabcolsep{0.5pt}
			\begin{tabular}{ccc}
				\centering
				\includegraphics[width=26mm, height=22mm]{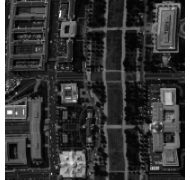} &
				\includegraphics[width=26mm, height=22mm]{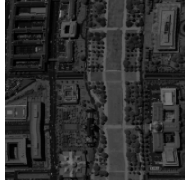} &
				\includegraphics[width=26mm, height=22mm]{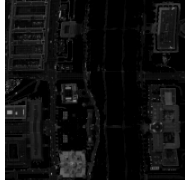} \\
				\scriptsize {DCMall} & \scriptsize $\mathcal{X}_+$ & \scriptsize $\mathcal{X}_-$ \\
				\includegraphics[width=26mm, height=22mm]{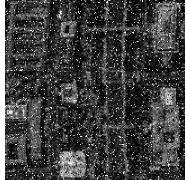} &
				\includegraphics[width=26mm, height=22mm]{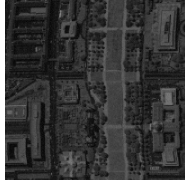} &
				\includegraphics[width=26mm, height=22mm]{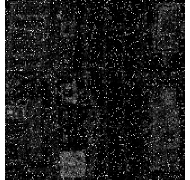} \\
				\scriptsize Noisy & \scriptsize $\mathcal{X}_+$ & \scriptsize $\mathcal{X}_-$ \\
				\includegraphics[width=26mm, height=22mm]{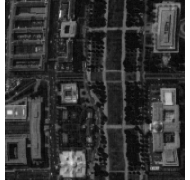} &
				\includegraphics[width=26mm, height=22mm]{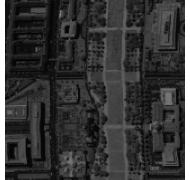} &
				\includegraphics[width=26mm, height=22mm]{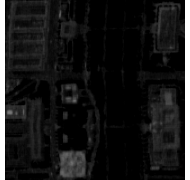} \\
				\scriptsize Recovered & \scriptsize $\mathcal{X}_+$ & \scriptsize $\mathcal{X}_-$ \\
			\end{tabular}
			\label{fig.rankone2}
		\end{minipage}
		
		\vspace{0.5cm}
		\caption{Split results of the DCMall under two noise cases. For each subfigure, the three columns show the original image, the dominant rank-one component ($\mathcal{X}_+$), and the residual component ($\mathcal{X}_-$), respectively. Rows represent the original input, noisy input, and the recovered results by our proposed method.}
		\label{fig.rankone_combined}
	\end{figure*}

	\begin{table*}[tp]
		\renewcommand{\arraystretch}{1.1}
		\setlength\tabcolsep{10pt}
		\footnotesize
		\caption{
			The HSI denoising performances of all competing methods under Gaussian noise with different levels.
		}\label{table:hsirecover1}
		\setlength{\abovecaptionskip}{6pt}
		\setlength{\belowcaptionskip}{6pt}
		\centering
		\begin{tabular}{c||c|c|c|c|c|c|c|c}
			\Xhline{1pt}
			Noise & \multicolumn{4}{c|}{G = 0.1} & \multicolumn{4}{c}{G = 0.2} \\
			\cline{2-9}
			Method & PSNR & SSIM & FSIM & ERGAS & PSNR & SSIM & FSIM & ERGAS \\
			\Xhline{1pt}
			\multicolumn{9}{c}{Pavia}\\
			\hline
			\hline
			SNN\cite{huang2015provable} & 24.014 & 0.742 & 0.877 & 230.348 & 21.997 & 0.636 & 0.840 & 290.705 \\
			TNN\cite{lu2019tensor} & 24.233 & 0.618 & 0.812 & 228.456 & 22.472 & 0.483 & 0.765 & 277.356 \\
			LRTV\cite{he2015total} & 25.239 & 0.657 & 0.775 & 199.480 & 23.877 & 0.570 & 0.722 & 234.342 \\
			TLR-SSTV\cite{chen2018tensor} & 24.776 & 0.612 & 0.786 & 212.024 & 23.613 & 0.535 & 0.749 & 242.248 \\
			TCTV\cite{WangHailin2023} & 26.704 & 0.755 & 0.858 & 171.792 & 24.840 & 0.661 & 0.815 & 211.345 \\
			SSCTV-RPCA\cite{GAO2025128885} & 25.803 & 0.710 & 0.861 & 190.854 & 23.971 & 0.621 & 0.823 & 234.541 \\
			\textbf{Ours} & \textbf{27.236} & \textbf{0.777} & \textbf{0.884} & \textbf{159.089} & \textbf{25.227} & \textbf{0.682} & \textbf{0.846} & \textbf{199.858} \\
			\hline
			\hline
			\multicolumn{9}{c}{DCMall}\\
			\hline
			\hline
			SNN\cite{huang2015provable} & 22.140 & 0.790 & 0.896 & 241.269 & 19.937 & 0.697 & 0.858 & 310.102 \\
			TNN\cite{lu2019tensor} & 22.520 & 0.708 & 0.856 & 234.331 & 20.896 & 0.608 & 0.817 & 280.380 \\
			LRTV\cite{he2015total} & 22.673 & 0.665 & 0.775 & 227.392 & 21.297 & 0.578 & 0.722 & 267.348 \\
			TLR-SSTV\cite{chen2018tensor} & 22.066 & 0.625 & 0.804 & 243.509 & 20.914 & 0.542 & 0.765 & 277.833 \\
			TCTV\cite{WangHailin2023} & 24.776 & 0.801 & 0.892 & 180.865 & 23.154 & 0.731 & 0.861 & 216.757 \\
			SSCTV-RPCA\cite{GAO2025128885} & 24.732 & 0.796 & 0.898 & 183.022 & 22.751 & 0.720 & 0.866 & 228.634 \\
			\textbf{Ours} & \textbf{25.317} & \textbf{0.817} & \textbf{0.903} & \textbf{168.990} & \textbf{23.607} & \textbf{0.750} & \textbf{0.876} & \textbf{204.188} \\
			\hline
			\hline
			\multicolumn{9}{c}{RemoteImage}\\
			\hline
			\hline
			SNN\cite{huang2015provable} & 23.139 & 0.707 & 0.883 & 173.897 & 20.917 & 0.594 & 0.849 & 223.901 \\
			TNN\cite{lu2019tensor} & 23.771 & 0.532 & 0.822 & 166.572 & 22.182 & 0.399 & 0.790 & 197.927 \\
			LRTV\cite{he2015total} & 25.134 & 0.529 & 0.697 & 140.526 & 24.031 & 0.469 & 0.659 & 159.492 \\
			TLR-SSTV\cite{chen2018tensor} & 24.291 & 0.497 & 0.755 & 160.440 & 23.551 & 0.449 & 0.734 & 171.934 \\
			TCTV\cite{WangHailin2023} & 25.600 & 0.654 & 0.860 & 135.957 & 23.790 & 0.544 & 0.823 & 165.872 \\
			SSCTV-RPCA\cite{GAO2025128885} & 25.066 & 0.664 & 0.869 & 142.269 & 23.101 & 0.562 & 0.832 & 176.548 \\
			\textbf{Ours} & \textbf{26.838} & \textbf{0.752} & \textbf{0.897} & \textbf{117.527} & \textbf{24.278} & \textbf{0.626} & \textbf{0.853} & \textbf{154.996} \\
			\hline
			\hline
			\Xhline{1pt}
		\end{tabular}
		\vspace{-0.3cm}
	\end{table*}

	\begin{table*}[tp]
		\renewcommand{\arraystretch}{1.1}
		\setlength\tabcolsep{10pt}
		\footnotesize
		\caption{
			The HSI denoising performances of all competing methods under mixed noise (salt-and-pepper and Gaussian) with different levels.
		}\label{table:hsirecover2}
		\setlength{\abovecaptionskip}{5pt}
		\setlength{\belowcaptionskip}{5pt}
		\centering
		\begin{tabular}{c||c|c|c|c|c|c|c|c}
			\Xhline{1pt}
			Noise & \multicolumn{4}{c|}{S = 0.1, G = 0.01} & \multicolumn{4}{c}{S = 0.1, G = 0.1} \\
			\cline{2-9}
			Method & PSNR & SSIM & FSIM & ERGAS & PSNR & SSIM & FSIM & ERGAS \\
			\Xhline{1pt}
			\multicolumn{9}{c}{Pavia}\\
			\hline
			\hline
			SNN\cite{huang2015provable} & 29.992 & 0.917 & 0.951 & 117.749 & 24.228 & 0.719 & 0.863 & 225.212 \\
			TNN\cite{lu2019tensor} & 29.261 & 0.863 & 0.918 & 135.590 & 23.502 & 0.569 & 0.794 & 248.374 \\
			LRTV\cite{he2015total} & 29.610 & 0.851 & 0.895 & 120.921 & 24.859 & 0.625 & 0.757 & 208.554 \\
			TLR-SSTV\cite{chen2018tensor} & 28.304 & 0.802 & 0.888 & 141.344 & 22.710 & 0.458 & 0.640 & 269.244 \\
			TCTV\cite{WangHailin2023} & 31.615 & 0.908 & 0.940 & 101.874 & 25.717 & 0.715 & 0.840 & 192.512 \\
			SSCTV-RPCA\cite{GAO2025128885} & 30.641 & 0.872 & 0.932 & 109.830 & 24.901 & 0.672 & 0.845 & 211.589 \\
			\textbf{Ours} & \textbf{32.931} & \textbf{0.929} & \textbf{0.959} & \textbf{84.660} & \textbf{26.283} & \textbf{0.750} & \textbf{0.877} & \textbf{177.557} \\
			\hline
			\hline
			\multicolumn{9}{c}{DCMall}\\
			\hline
			\hline
			SNN\cite{huang2015provable} & 28.927 & 0.942 & 0.966 & 111.820 & 21.615 & 0.761 & 0.881 & 256.127 \\
			TNN\cite{lu2019tensor} & 27.352 & 0.887 & 0.936 & 140.890 & 21.770 & 0.668 & 0.840 & 254.953 \\
			LRTV\cite{he2015total} & 27.031 & 0.856 & 0.901 & 137.416 & 22.201 & 0.630 & 0.755 & 239.865 \\
			TLR-SSTV\cite{chen2018tensor} & 26.123 & 0.831 & 0.906 & 153.028 & 20.482 & 0.494 & 0.656 & 292.021 \\
			TCTV\cite{WangHailin2023} & 29.506 & 0.923 & 0.953 & 108.540 & 23.946 & 0.771 & 0.878 & 198.672 \\
			SSCTV-RPCA\cite{GAO2025128885} & 30.149 & 0.922 & 0.955 & 98.687 & 23.936 & 0.769 & 0.886 & 200.362 \\
			\textbf{Ours} & \textbf{30.737} & \textbf{0.941} & \textbf{0.963} & \textbf{92.380} & \textbf{24.529} & \textbf{0.794} & \textbf{0.894} & \textbf{185.114} \\
			\hline
			\hline
			\multicolumn{9}{c}{RemoteImage}\\
			\hline
			\hline
			SNN\cite{huang2015provable} & 29.344 & 0.906 & 0.954 & 86.689 & 22.764 & 0.679 & 0.872 & 181.382 \\
			TNN\cite{lu2019tensor} & 28.426 & 0.821 & 0.911 & 102.510 & 23.271 & 0.489 & 0.809 & 176.378 \\
			LRTV\cite{he2015total} & 28.248 & 0.715 & 0.827 & 97.467 & 24.727 & 0.500 & 0.673 & 147.546 \\
			TLR-SSTV\cite{chen2018tensor} & 26.911 & 0.681 & 0.850 & 120.428 & 22.846 & 0.389 & 0.548 & 186.522 \\
			TCTV\cite{WangHailin2023} & 30.292 & 0.864 & 0.934 & 83.351 & 25.047 & 0.617 & 0.847 & 145.050 \\
			SSCTV-RPCA\cite{GAO2025128885} & 29.831 & 0.846 & 0.932 & 84.529 & 24.464 & 0.629 & 0.857 & 151.880 \\
			\textbf{Ours} & \textbf{32.410} & \textbf{0.920} & \textbf{0.958} & \textbf{66.820} & \textbf{26.218} & \textbf{0.722} & \textbf{0.887} & \textbf{126.197} \\
			\hline
			\hline
			\Xhline{1pt}
		\end{tabular}
		\vspace{-0.3cm}
	\end{table*}
	
	\begin{figure*}[htbp]
		\centering
		\includegraphics[width=14cm, height=8cm]{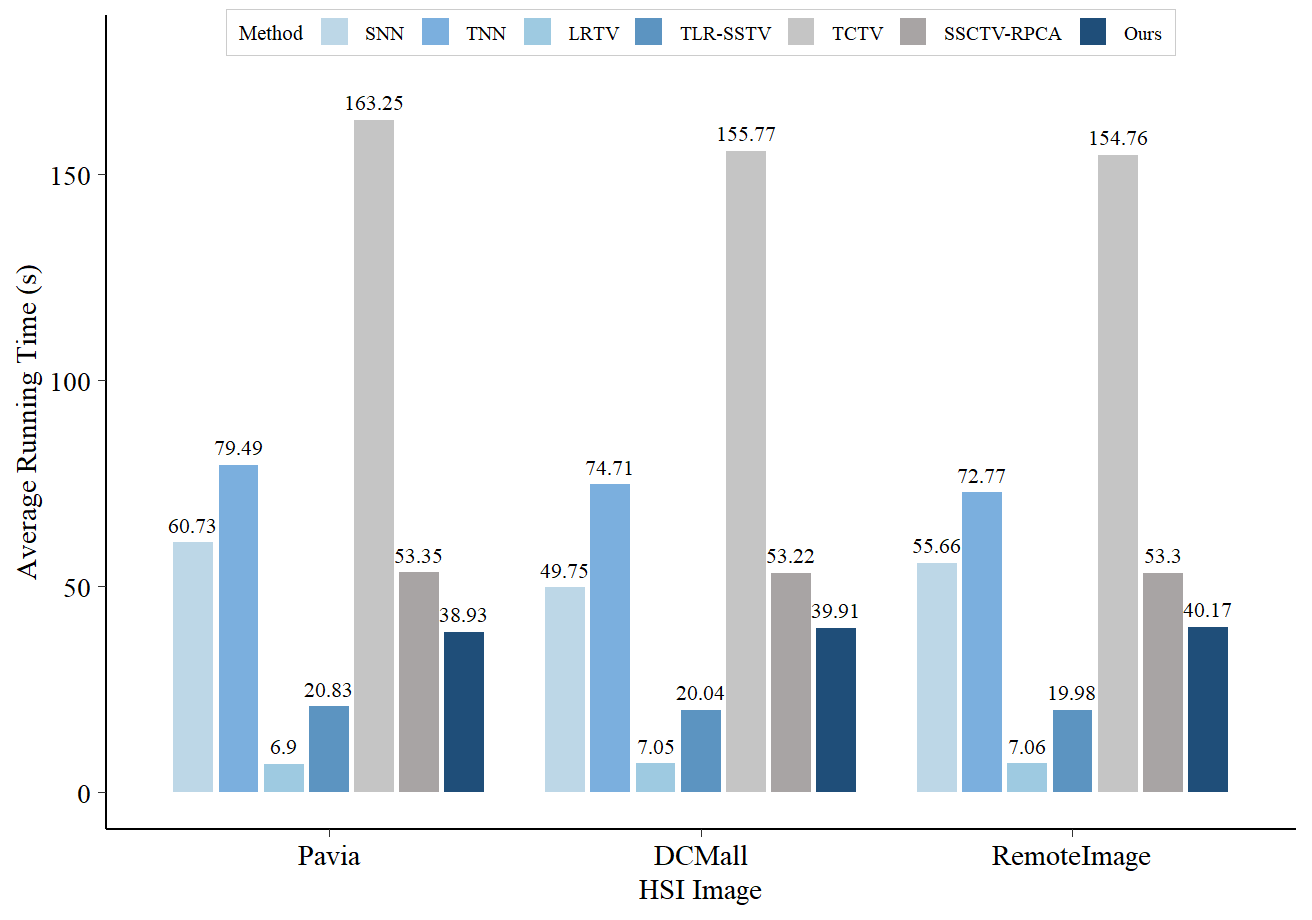}
		\caption{Average running time of all competing methods on three HSI images (Pavia, DCMall, and RemoteImage) under different noise cases.}
		\label{ave_time}
	\end{figure*}

	\begin{figure*}[tp]
		\renewcommand{\arraystretch}{1.2}
		\setlength\tabcolsep{1pt}
		\centering
		\vspace{-0.1cm}
		\begin{tabular}{ccccccccc}
			\centering
			\includegraphics[width=17mm, height = 20mm]{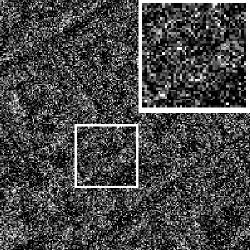}&
			\includegraphics[width=17mm, height = 20mm]{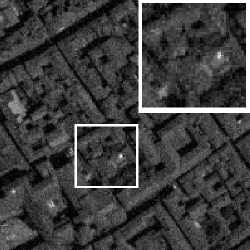}&
			\includegraphics[width=17mm, height = 20mm]{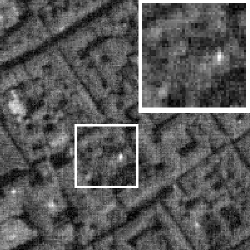}&
			\includegraphics[width=17mm, height = 20mm]{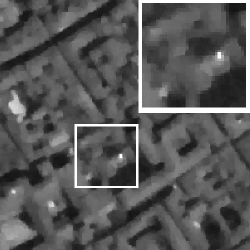}&
			\includegraphics[width=17mm, height = 20mm]{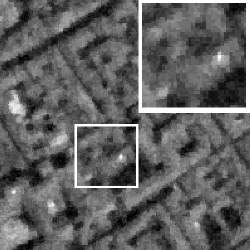}&
			\includegraphics[width=17mm, height = 20mm]{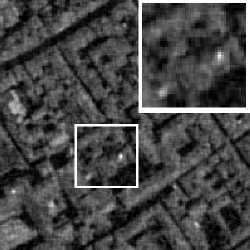}&
			\includegraphics[width=17mm, height = 20mm]{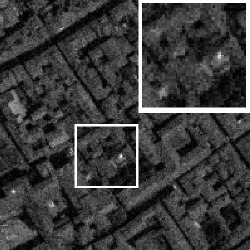}&
			\includegraphics[width=17mm, height = 20mm]{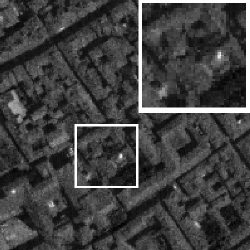}&
			\includegraphics[width=17mm, height = 20mm]{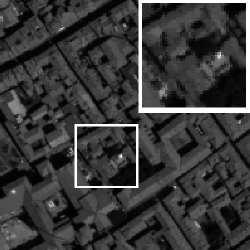}\\
			\includegraphics[width=17mm, height = 20mm]{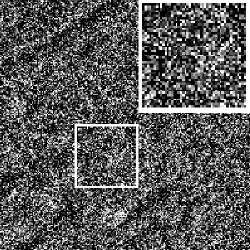}&
			\includegraphics[width=17mm, height = 20mm]{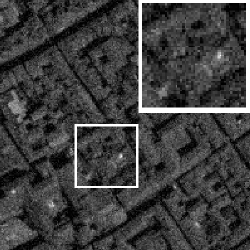}&
			\includegraphics[width=17mm, height = 20mm]{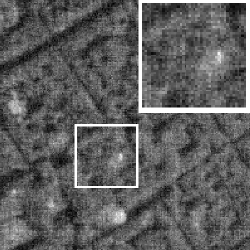}&
			\includegraphics[width=17mm, height = 20mm]{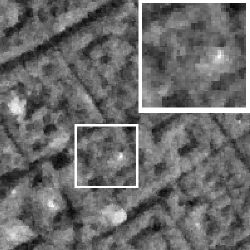}&
			\includegraphics[width=17mm, height = 20mm]{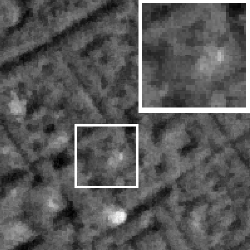}&
			\includegraphics[width=17mm, height = 20mm]{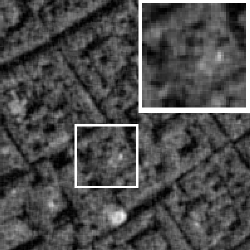}&
			\includegraphics[width=17mm, height = 20mm]{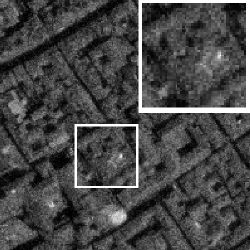}&
			\includegraphics[width=17mm, height = 20mm]{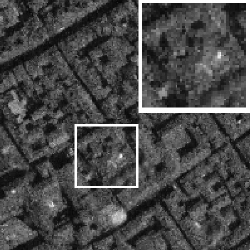}&
			\includegraphics[width=17mm, height = 20mm]{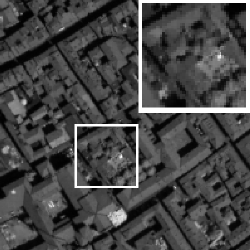}\\
			\includegraphics[width=17mm, height = 20mm]{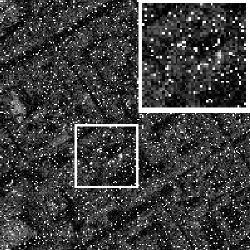}&
			\includegraphics[width=17mm, height = 20mm]{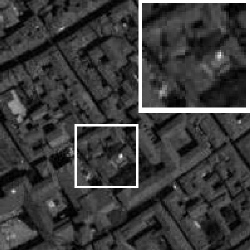}&
			\includegraphics[width=17mm, height = 20mm]{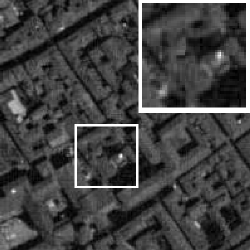}&
			\includegraphics[width=17mm, height = 20mm]{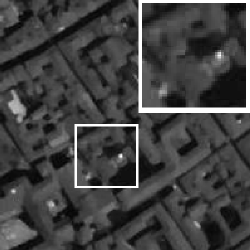}&
			\includegraphics[width=17mm, height = 20mm]{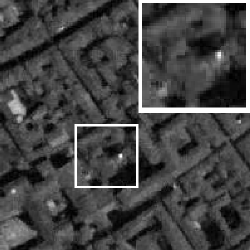}&
			\includegraphics[width=17mm, height = 20mm]{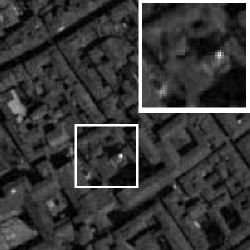}&
			\includegraphics[width=17mm, height = 20mm]{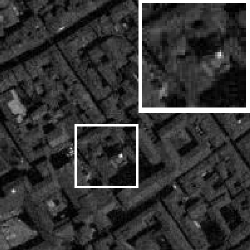}&
			\includegraphics[width=17mm, height = 20mm]{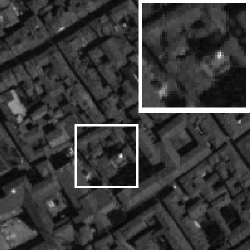}&
			\includegraphics[width=17mm, height = 20mm]{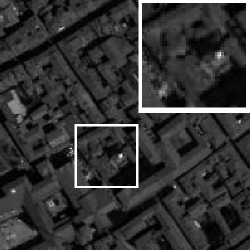}\\
			\includegraphics[width=17mm, height = 20mm]{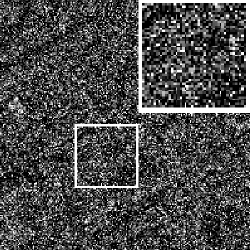}&
			\includegraphics[width=17mm, height = 20mm]{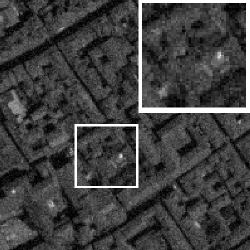}&
			\includegraphics[width=17mm, height = 20mm]{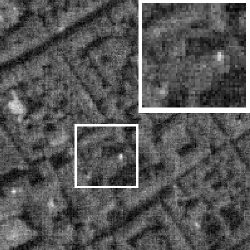}&
			\includegraphics[width=17mm, height = 20mm]{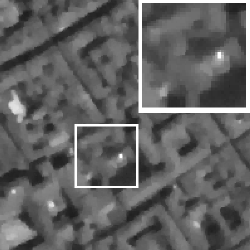}&
			\includegraphics[width=17mm, height = 20mm]{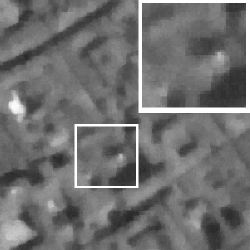}&
			\includegraphics[width=17mm, height = 20mm]{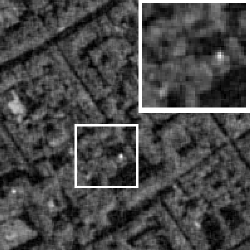}&
			\includegraphics[width=17mm, height = 20mm]{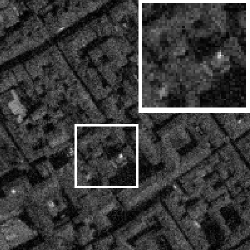}&
			\includegraphics[width=17mm, height = 20mm]{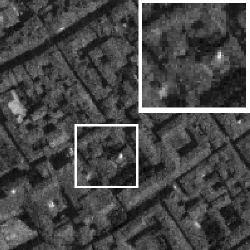}&
			\includegraphics[width=17mm, height = 20mm]{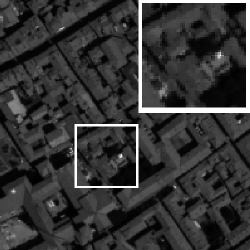}\\
			\scriptsize Observed & \scriptsize SNN & \scriptsize TNN & \scriptsize LRTV & \scriptsize TLR-SSTV & \scriptsize TCTV & \scriptsize SSCTV-RPCA & \scriptsize \textbf{Ours}  & \scriptsize Ground truth\\
		\end{tabular}
		\vspace{-0.2cm}
		\caption{Recovered Pavia HSI (bands 45, 66, 25, 35) from all competing methods under different noise conditions. From row 1 to row 4: Case 1 ($\text{G} = 0.1$), Case 2 ($\text{G} = 0.2$), Case 3 ($\text{S} = 0.1$, $\text{G} = 0.01$), Case 4 ($\text{S} = 0.1$, $\text{G} = 0.1$).}\label{fig.3}
		\vspace{-0.5cm}
	\end{figure*}
	
	\begin{figure*}[tp]
		\renewcommand{\arraystretch}{1.2}
		\setlength\tabcolsep{1pt}
		\centering
		\vspace{-0.1cm}
		\begin{tabular}{ccccccccc}
			\centering
			\includegraphics[width=17mm, height = 20mm]{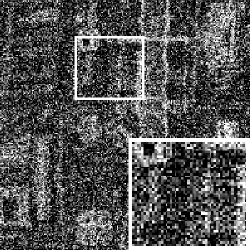}&
			\includegraphics[width=17mm, height = 20mm]{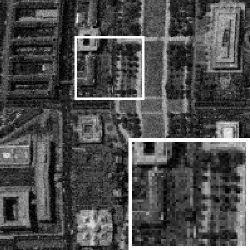}&
			\includegraphics[width=17mm, height = 20mm]{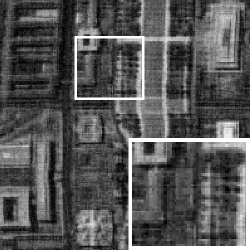}&
			\includegraphics[width=17mm, height = 20mm]{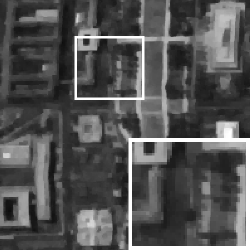}&
			\includegraphics[width=17mm, height = 20mm]{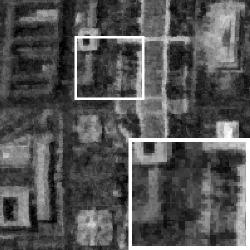}&
			\includegraphics[width=17mm, height = 20mm]{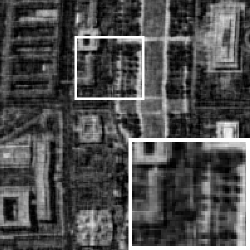}&
			\includegraphics[width=17mm, height = 20mm]{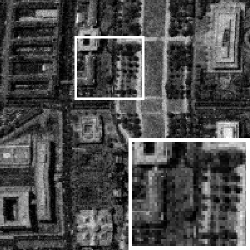}&
			\includegraphics[width=17mm, height = 20mm]{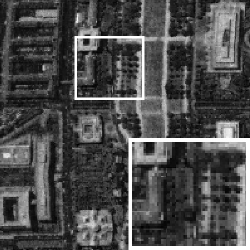}&
			\includegraphics[width=17mm, height = 20mm]{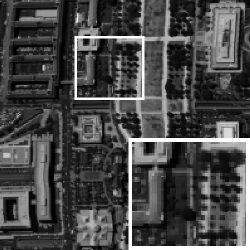}\\
			\includegraphics[width=17mm, height = 20mm]{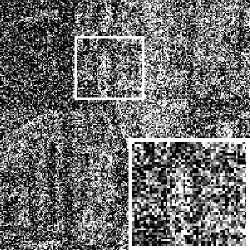}&
			\includegraphics[width=17mm, height = 20mm]{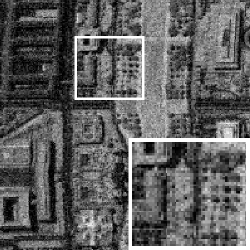}&
			\includegraphics[width=17mm, height = 20mm]{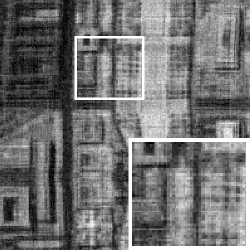}&
			\includegraphics[width=17mm, height = 20mm]{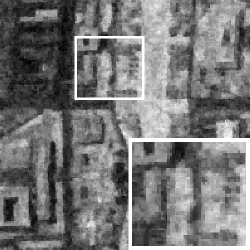}&
			\includegraphics[width=17mm, height = 20mm]{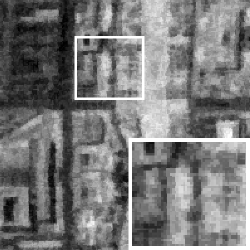}&
			\includegraphics[width=17mm, height = 20mm]{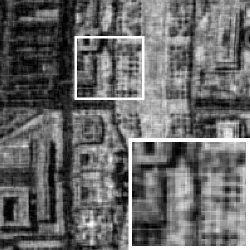}&
			\includegraphics[width=17mm, height = 20mm]{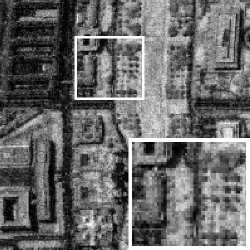}&
			\includegraphics[width=17mm, height = 20mm]{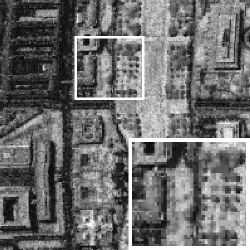}&
			\includegraphics[width=17mm, height = 20mm]{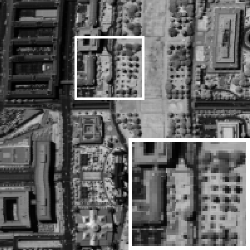}\\
			\includegraphics[width=17mm, height = 20mm]{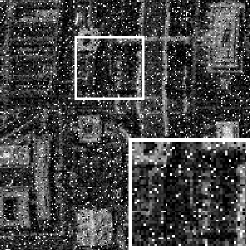}&
			\includegraphics[width=17mm, height = 20mm]{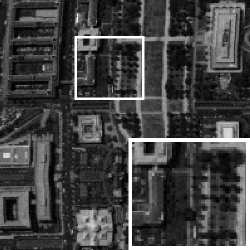}&
			\includegraphics[width=17mm, height = 20mm]{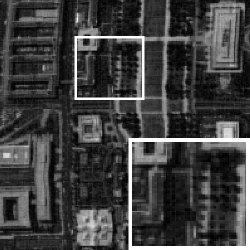}&
			\includegraphics[width=17mm, height = 20mm]{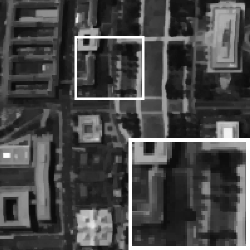}&
			\includegraphics[width=17mm, height = 20mm]{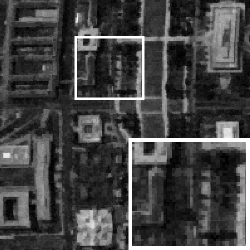}&
			\includegraphics[width=17mm, height = 20mm]{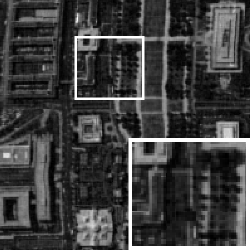}&
			\includegraphics[width=17mm, height = 20mm]{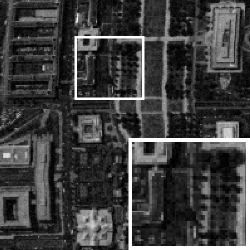}&
			\includegraphics[width=17mm, height = 20mm]{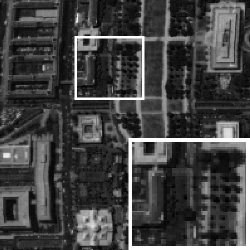}&
			\includegraphics[width=17mm, height = 20mm]{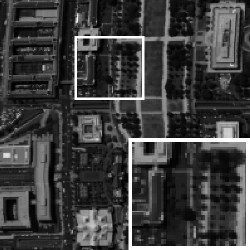}\\
			\includegraphics[width=17mm, height = 20mm]{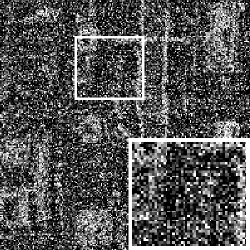}&
			\includegraphics[width=17mm, height = 20mm]{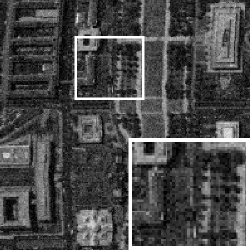}&
			\includegraphics[width=17mm, height = 20mm]{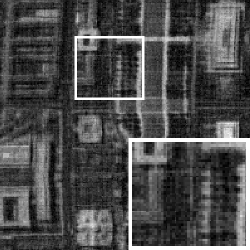}&
			\includegraphics[width=17mm, height = 20mm]{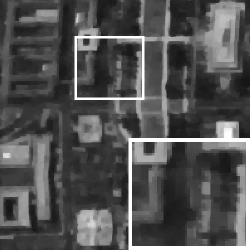}&
			\includegraphics[width=17mm, height = 20mm]{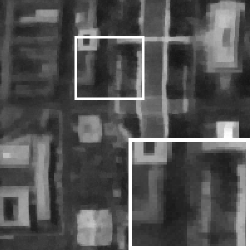}&
			\includegraphics[width=17mm, height = 20mm]{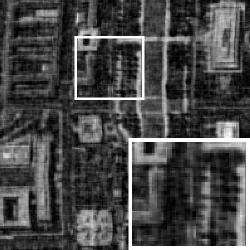}&
			\includegraphics[width=17mm, height = 20mm]{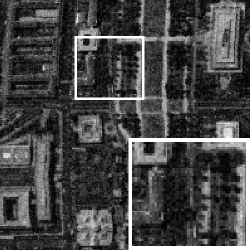}&
			\includegraphics[width=17mm, height = 20mm]{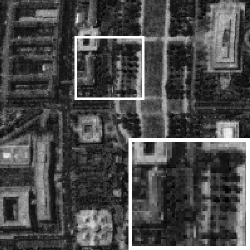}&
			\includegraphics[width=17mm, height = 20mm]{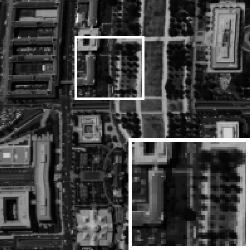}\\
			\scriptsize Observed & \scriptsize SNN & \scriptsize TNN & \scriptsize LRTV & \scriptsize TLR-SSTV & \scriptsize TCTV & \scriptsize SSCTV-RPCA & \scriptsize \textbf{Ours}  & \scriptsize Ground truth\\
		\end{tabular}
		\vspace{-0.2cm}
		\caption{Recovered DCMall HSI (bands 45, 66, 25, 35) from all competing methods under different noise conditions. From row 1 to row 4: Case 1 ($\text{G} = 0.1$), Case 2 ($\text{G} = 0.2$), Case 3 ($\text{S} = 0.1$, $\text{G} = 0.01$), Case 4 ($\text{S} = 0.1$, $\text{G} = 0.1$).}\label{fig.4}
		\vspace{-0.5cm}
	\end{figure*}
	
	\begin{figure}[tp]
		\renewcommand{\arraystretch}{0.5}
		\setlength\tabcolsep{0.5pt}
		\centering
		\vspace{-0.2cm}
		\begin{tabular}{ccc}
			\centering
			\tiny 14.892/0.195/449.319 & \tiny 29.344/0.906/86.689 & \tiny 28.426/0.821/102.510 \\
			\includegraphics[width=26mm, height = 21mm]{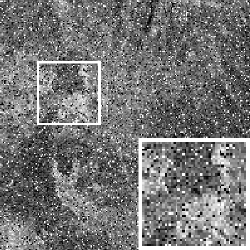}&
			\includegraphics[width=26mm, height = 21mm]{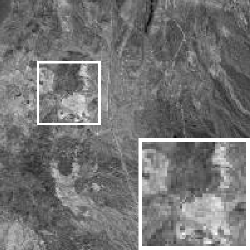}&
			\includegraphics[width=26mm, height = 21mm]{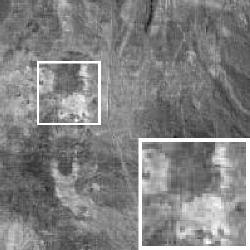}\\
			\scriptsize {Noisy} & \scriptsize SNN & \scriptsize TNN \\
			\tiny 28.248/0.715/97.467 & \tiny 26.911/0.681/120.428 & \tiny 30.292/0.864/83.351\\
			\includegraphics[width=26mm, height = 21mm]{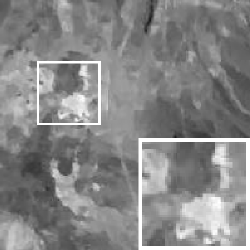}&
			\includegraphics[width=26mm, height = 21mm]{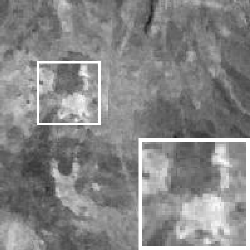}&
			\includegraphics[width=26mm, height = 21mm]{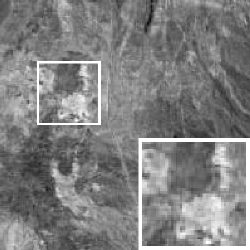}\\
			\scriptsize LRTV & \scriptsize TLR-SSTV & \scriptsize TCTV \\
			\tiny 29.831/0.846/84.529 & \tiny \textbf{32.403}/\textbf{0.920}/\textbf{67.176} & \tiny PSNR/SSIM/ERGAS\\
			\includegraphics[width=26mm, height = 21mm]{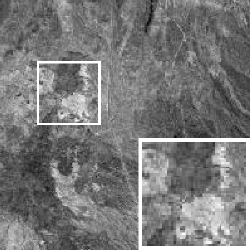}&
			\includegraphics[width=26mm, height = 21mm]{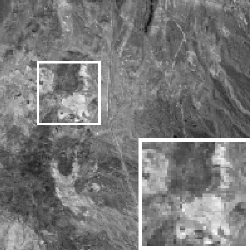}&
			\includegraphics[width=26mm, height = 21mm]{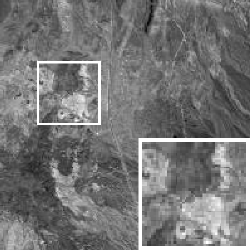}\\
			\scriptsize SSCTV-RPCA & \scriptsize \textbf{Ours} & \scriptsize {Ground truth}\\
		\end{tabular}
		\vspace{-0.2cm}
		\caption{Recovered RemoteImage HSI (band 25) from all competing methods under mixed noise of $\text{S}=0.1, \text{G}=0.01$.}\label{fig.5}
		\vspace{-0.3cm}
	\end{figure}
	
	\subsection{Experiments on Color Video Background Modeling}
	This subsection presents numerical experiments conducted on the publicly available SBMnet dataset$\footnote{http://www.SceneBackgroundModeling.net}$ \cite{jodoin2017extensive} to demonstrate the capability of our method in extracting background models from color video sequences. The performance of our proposed method is evaluated using four metrics recommended by the SBMnet dataset: AGE \cite{BOUWMANS20173}, PSNR, MSSSIM, and CQM \cite{Yalman2013}. For these metrics, higher values of PSNR, MSSSIM, and CQM indicate better background modeling, whereas a lower AGE corresponds to superior performance. Comparisons are made between our method and several state-of-the-art approaches, including ABM \cite{avola2017adaptive}, BE-AAPSA\cite{ramirez2017temporal}, DMD\cite{2014Dynamic}, Q-DMD\cite{HAN2022103560}, ETRPCA\cite{9170824} and $p$-TRPCA\cite{YAN2024104520}. The parameters of the proposed method are configured as follows: $\lambda_1 = 0.001$, $\lambda_2 = 0.001$ and $\lambda_3 = 0.1$.
	
	To evaluate the performance of the proposed method for color video background modeling, we conducted simulation experiments on six videos from the SBMnet dataset. Representative frames of these eight videos are illustrated in Fig. \ref{fig.6}. The spatial resolutions of these videos range from $208 \times 296$ to $480 \times 720$. The ``511'' video, originally at $480 \times 640$ resolution, was down-sampled by a factor of 2 to maintain manageable memory requirements on personal computers. For videos with frame counts substantially exceeding 200, we randomly selected 200 consecutive frames containing both foreground and background information, discarding temporally static frames to enable efficient processing.
	
	\begin{figure}[tp]
		\renewcommand{\arraystretch}{0.5}
		\setlength\tabcolsep{1.5pt}
		\centering
		\vspace{-0.2cm}
		\begin{tabular}{ccc}
			\centering
			\includegraphics[width=25.5mm,height=16mm]{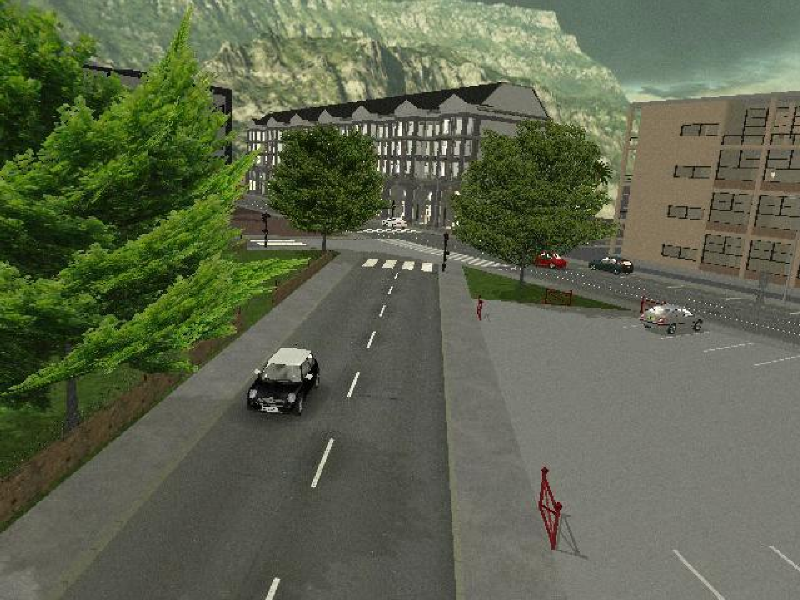}&
			\includegraphics[width=25.5mm,height=16mm]{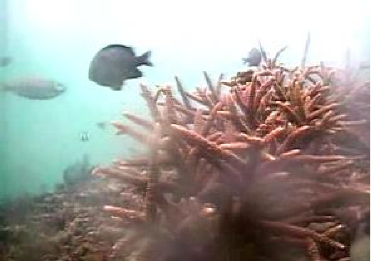}&
			\includegraphics[width=25.5mm,height=16mm]{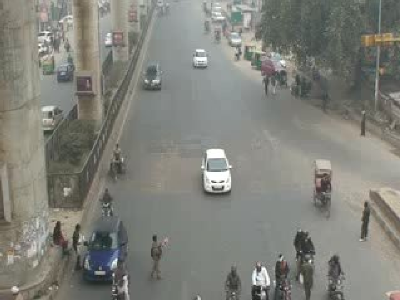}\\
			\footnotesize 511 & \footnotesize Blurred  & \footnotesize boulevardJam \\
			\includegraphics[width=25.5mm,height=16mm]{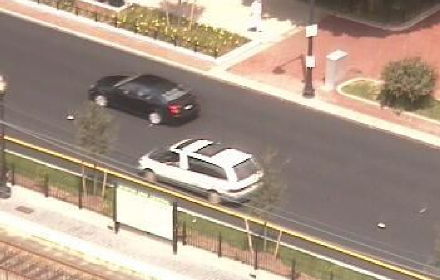}&
			\includegraphics[width=25.5mm,height=16mm]{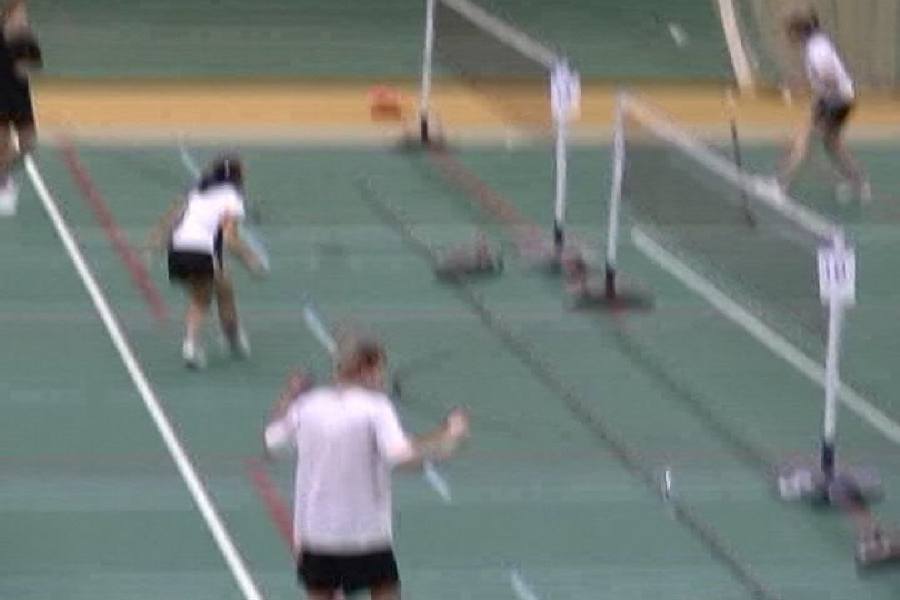}&
			\includegraphics[width=25.5mm,height=16mm]{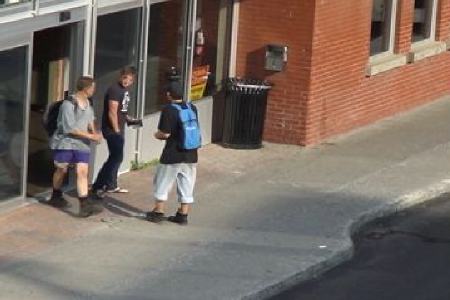}\\
			\footnotesize boulevard & \footnotesize Badminton  & \footnotesize BusStation\\
		\end{tabular}
		\vspace{-0.2cm}
		\caption{Representative frames from six videos of the SBMnet dataset.}\label{fig.6}
		\vspace{-0.3cm}
	\end{figure}
	
	The background generation results produced by various methods on the SBMnet dataset are presented in Fig. \ref{fig:SBMvisu}. It can be observed that our proposed method yields the most visually favorable background images compared to other approaches. For the sequences ``511'', ``Blurred'', and ``boulevard'', which feature long background exposure times and mild object motion, most methods produce clear backgrounds with satisfactory perceptual quality. Notably, our method achieves the best results among them, with the only exceptions being BE-AAPSA and 
	$p$-TRPCA, which underperform on the ``Blurred'' sequence. In contrast, for videos where the background is visible only intermittently (e.g., ``boulevard-Jam''), all compared methods, including ours, exhibit shadow effects from moving objects in the generated backgrounds. Nevertheless, our method achieves relatively better results under these challenging conditions.
	
	The ``badminton'' sequence involves camera jitter, where instability (e.g., vibration) coupled with foreground motion interferes with background generation. While most competing methods suffer from blurring effects, our method remains robust and delivers superior performance. ``BusStation'' belongs to the intermittent motion category, where the foreground moves, pauses briefly, and then resumes movement. This behavior often leads to ghosting artifacts in the reconstructed background. On this sequence, our method clearly outperforms ABM, DMD, Q-DMD, ETRPCA, and $p$-TRPCA.
	
	Quantitative evaluations are summarized in Table \ref{sbm}. As shown in Table \ref{sbm}, our method consistently achieves the best performance among all compared methods across all metrics. Specifically, our method achieves the lowest AGE values across all videos, with particularly notable improvements on ``Basic (511)'' and ``Jitter (Badminton)''. With regard to CQM, the most comprehensive metric considered in this study, our method ranks first on all six videos, surpassing the second-best method by an average margin of approximately $3.1\%$. Moreover, our method attains the highest MSSSIM and PSNR scores on every video as well. Together, these quantitative results validate the effectiveness and robustness of our proposed approach for color video background modeling.

	\begin{figure*}[tp]
		\renewcommand{\arraystretch}{1.2}
		\setlength\tabcolsep{1pt}
		\centering
		\vspace{-0.1cm}
		\begin{tabular}{ccccccccc}
			\centering
			\scriptsize{Original} & \scriptsize{ABM} & \scriptsize{BE-AA-PSA} & \scriptsize{DMD} & \scriptsize{Q-DMD}& \scriptsize{ETRPCA} & \scriptsize{$p$-TRPCA} &\scriptsize{\textbf{Ours}} & \scriptsize{Ground truth} \\
			\includegraphics[width=1.75cm,height=1.6cm]{SBM/ori/511_ori-eps-converted-to.pdf} &
			\includegraphics[width=1.75cm,height=1.6cm]{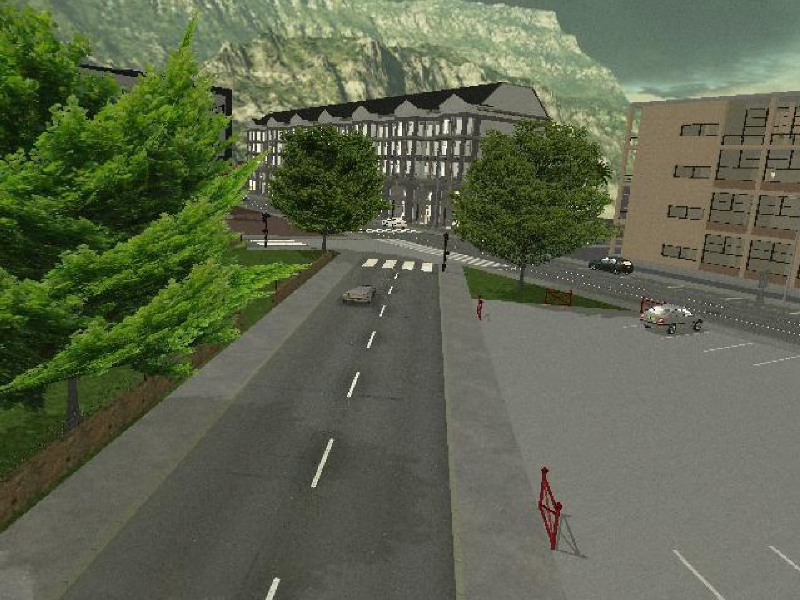} &
			\includegraphics[width=1.75cm,height=1.6cm]{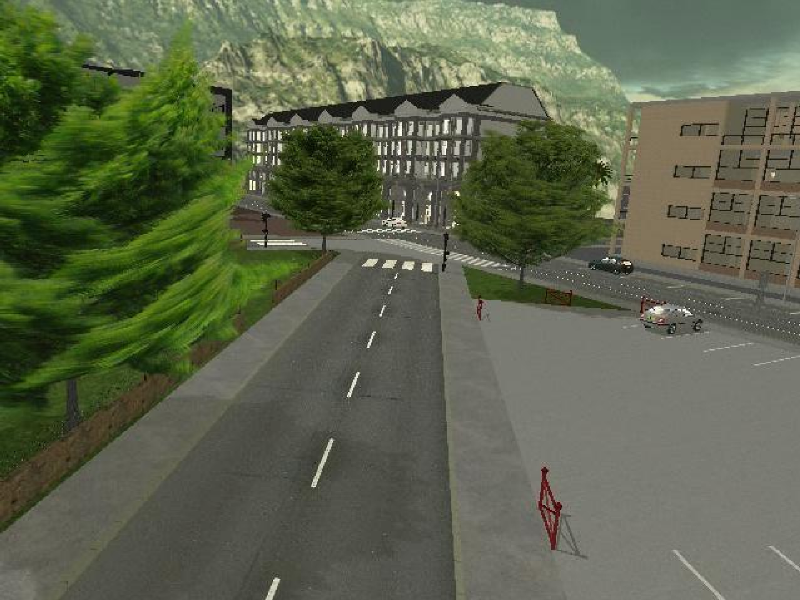} &
			\includegraphics[width=1.75cm,height=1.6cm]{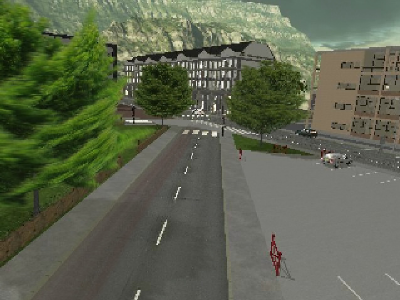} &
			\includegraphics[width=1.75cm,height=1.6cm]{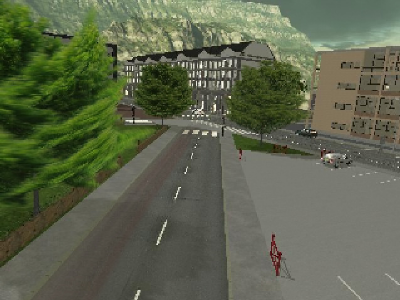} &
			\includegraphics[width=1.75cm,height=1.6cm]{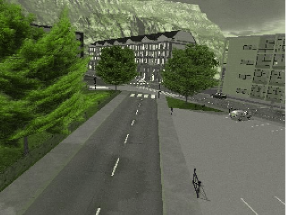} &
			\includegraphics[width=1.75cm,height=1.6cm]{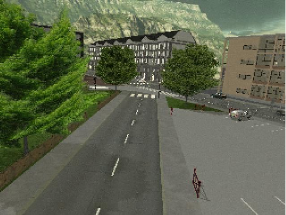} &
			\includegraphics[width=1.75cm,height=1.6cm]{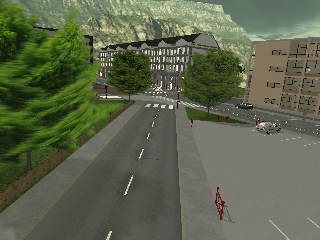} &
			\includegraphics[width=1.75cm,height=1.6cm]{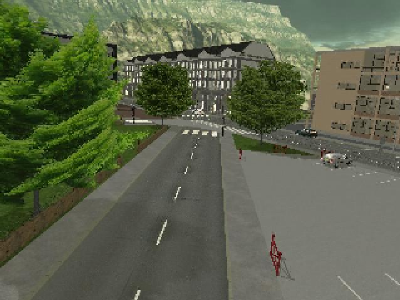} \\
			\hline
			\includegraphics[width=1.75cm,height=1.6cm]{SBM/ori/Blurred_ori-eps-converted-to.pdf} &
			\includegraphics[width=1.75cm,height=1.6cm]{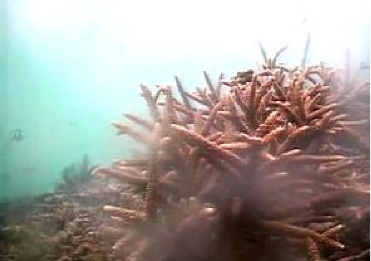} &
			\includegraphics[width=1.75cm,height=1.6cm]{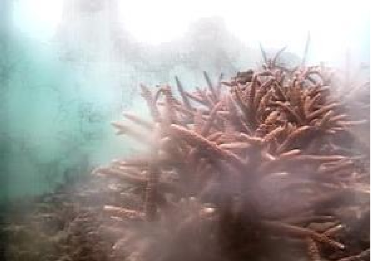} &
			\includegraphics[width=1.75cm,height=1.6cm]{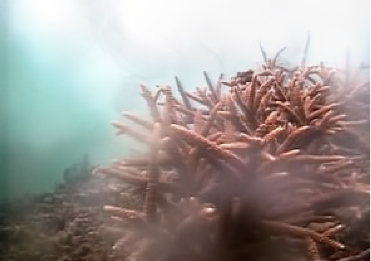} &
			\includegraphics[width=1.75cm,height=1.6cm]{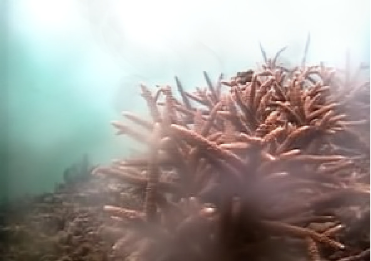}&
			\includegraphics[width=1.75cm,height=1.6cm]{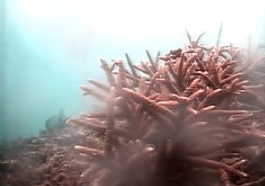} &
			\includegraphics[width=1.75cm,height=1.6cm]{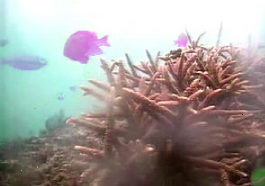} &
			\includegraphics[width=1.75cm,height=1.6cm]{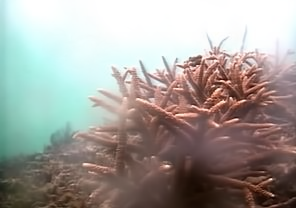}& \includegraphics[width=1.75cm,height=1.6cm]{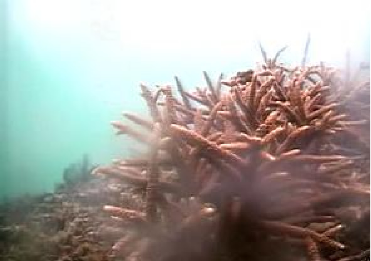} \\
			\hline
			\includegraphics[width=1.75cm,height=1.6cm]{SBM/ori/boulvardJam_ori-eps-converted-to.pdf} &
			\includegraphics[width=1.75cm,height=1.6cm]{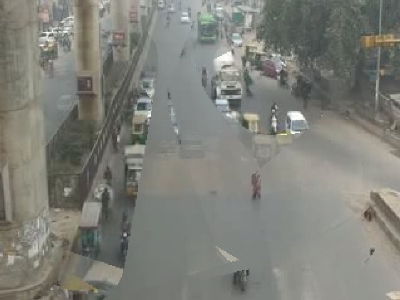} &
			\includegraphics[width=1.75cm,height=1.6cm]{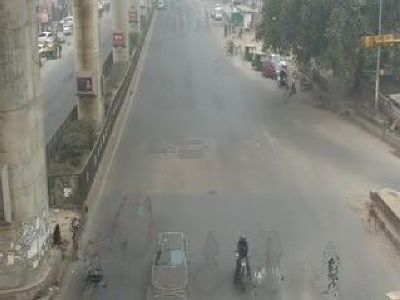} &
			\includegraphics[width=1.75cm,height=1.6cm]{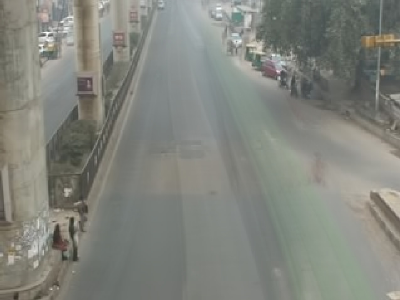} &
			\includegraphics[width=1.75cm,height=1.6cm]{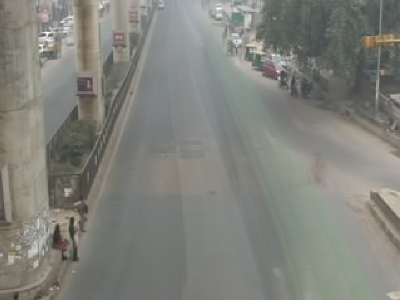}&
			\includegraphics[width=1.75cm,height=1.6cm]{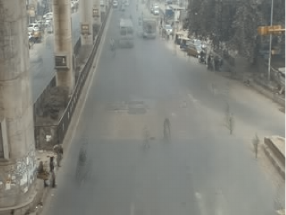} &
			\includegraphics[width=1.75cm,height=1.6cm]{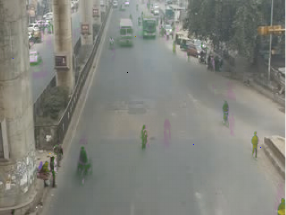} &
			\includegraphics[width=1.75cm,height=1.6cm]{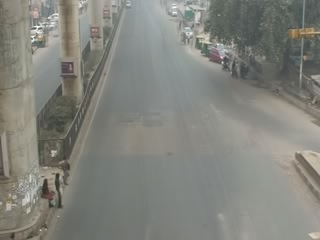} &
			\includegraphics[width=1.75cm,height=1.6cm]{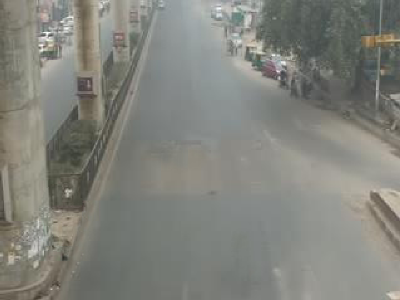}  \\
			\hline
			\includegraphics[width=1.75cm,height=1.6cm]{SBM/ori/boulevard_ori-eps-converted-to.pdf} &
			\includegraphics[width=1.75cm,height=1.6cm]{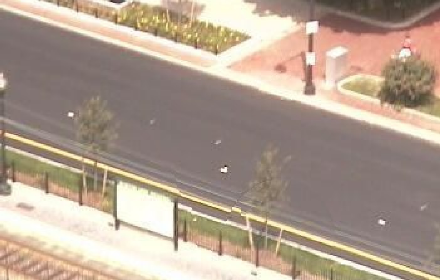} &
			\includegraphics[width=1.75cm,height=1.6cm]{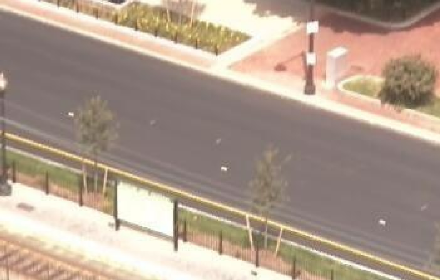} &
			\includegraphics[width=1.75cm,height=1.6cm]{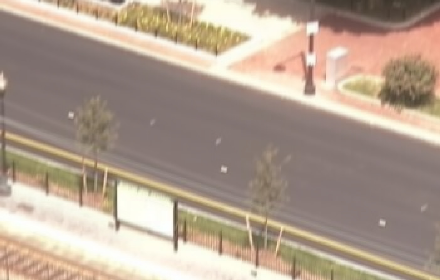} &
			\includegraphics[width=1.75cm,height=1.6cm]{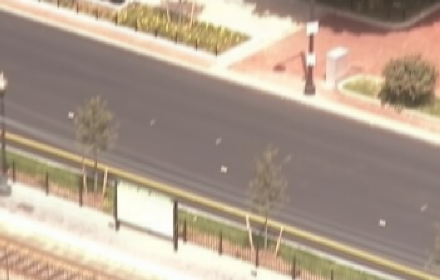}&	\includegraphics[width=1.75cm,height=1.6cm]{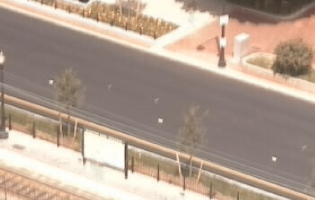} &
			\includegraphics[width=1.75cm,height=1.6cm]{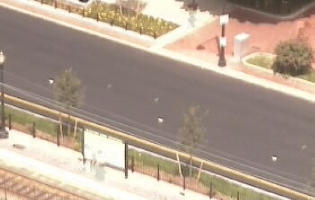} &
			\includegraphics[width=1.75cm,height=1.6cm]{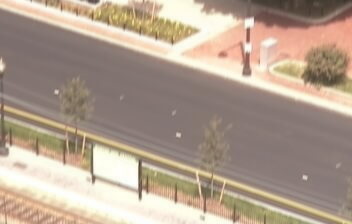} &	\includegraphics[width=1.75cm,height=1.6cm]{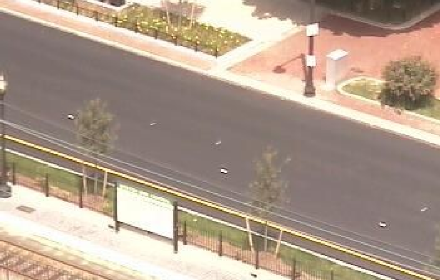} \\
			\hline
			\includegraphics[width=1.75cm,height=1.6cm]{SBM/ori/badminton_ori-eps-converted-to.pdf} &
			\includegraphics[width=1.75cm,height=1.6cm]{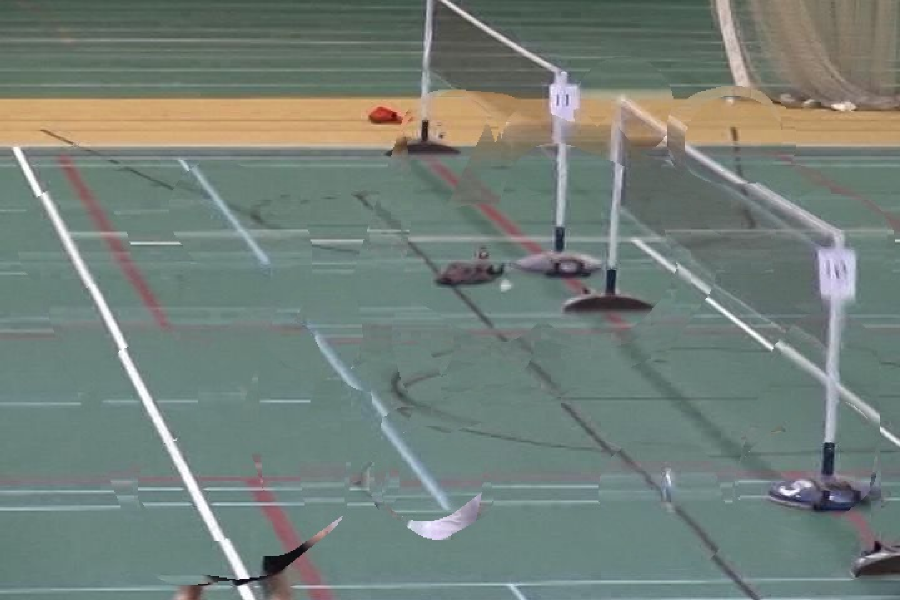} &
			\includegraphics[width=1.75cm,height=1.6cm]{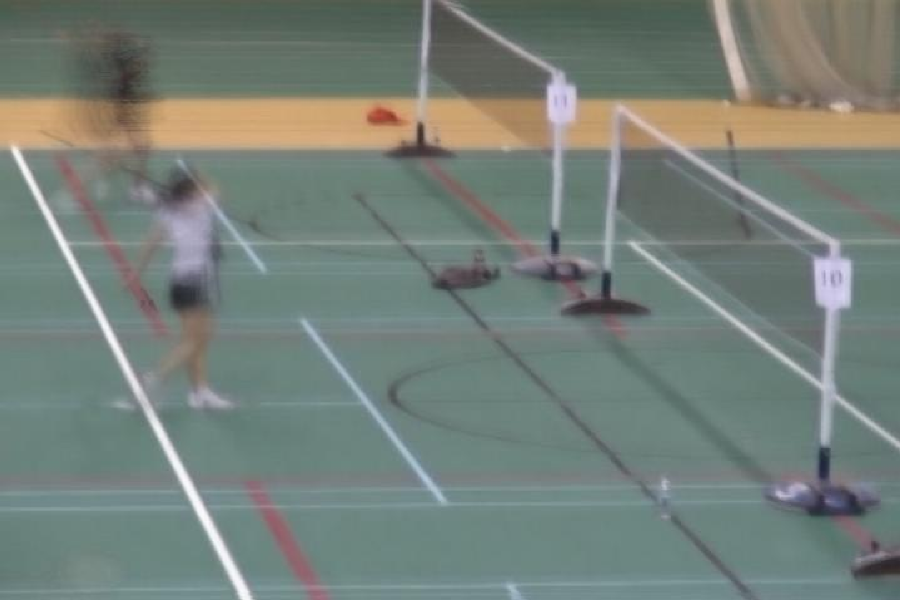} &
			\includegraphics[width=1.75cm,height=1.6cm]{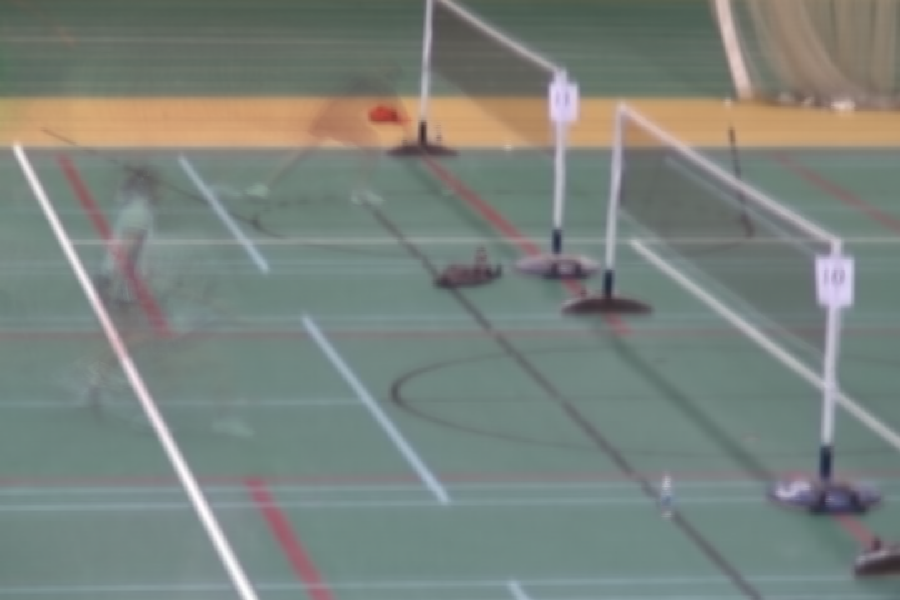} &
			\includegraphics[width=1.75cm,height=1.6cm]{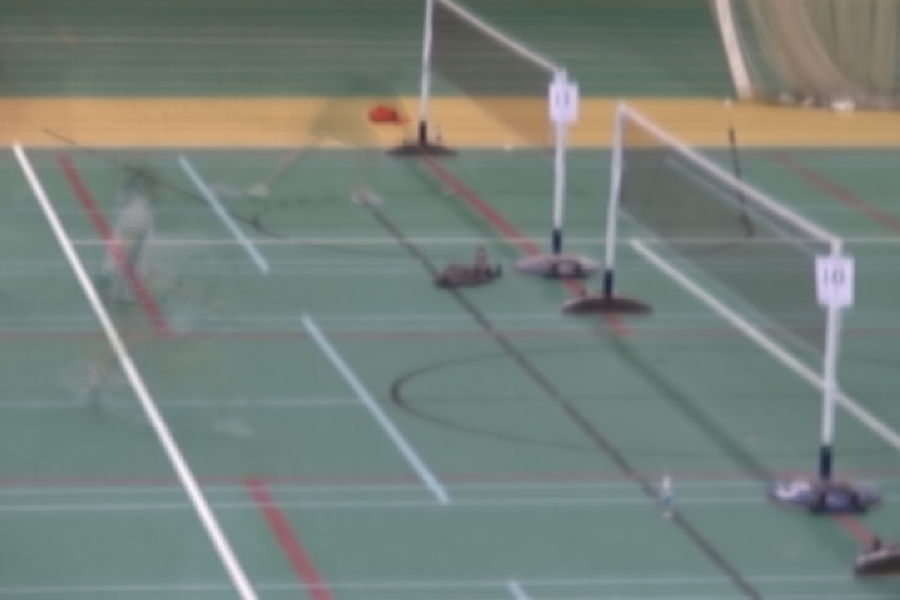} &
			\includegraphics[width=1.75cm,height=1.6cm]{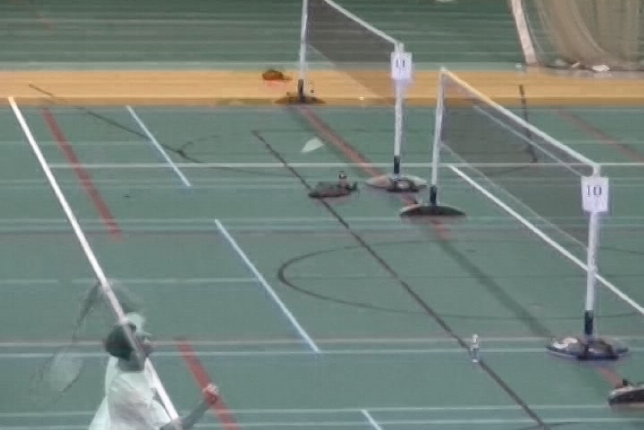} &
			\includegraphics[width=1.75cm,height=1.6cm]{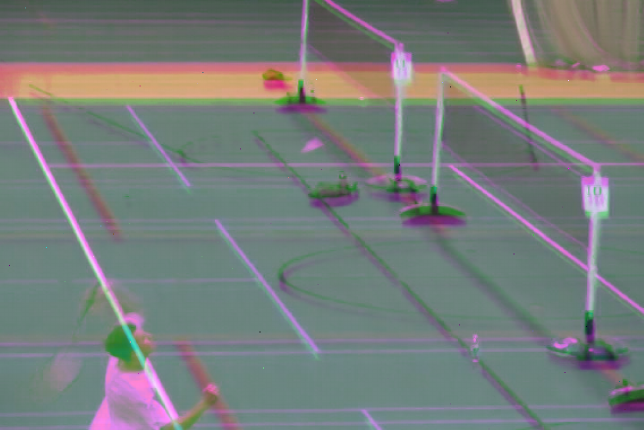} &
			\includegraphics[width=1.75cm,height=1.6cm]{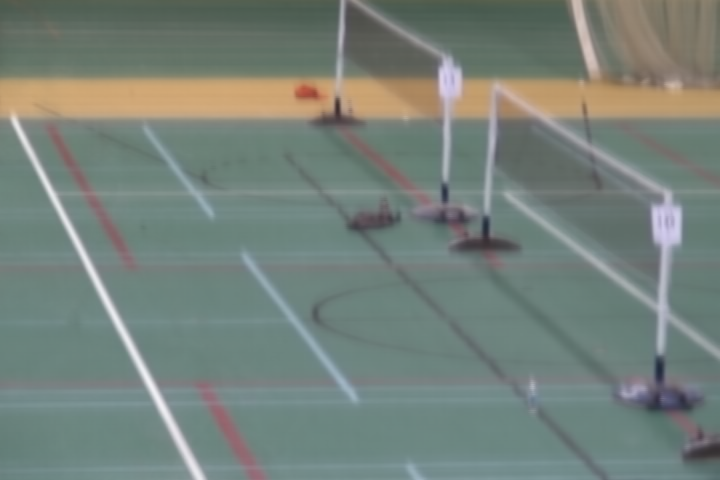}&	\includegraphics[width=1.75cm,height=1.6cm]{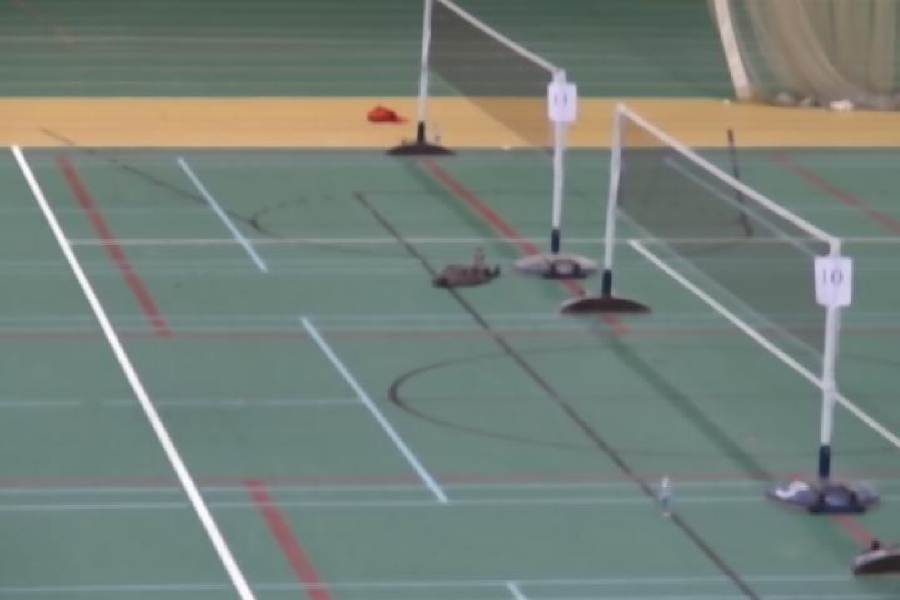} \\
			\hline
			\includegraphics[width=1.75cm,height=1.6cm]{SBM/ori/busStation_ori-eps-converted-to.pdf} &
			\includegraphics[width=1.75cm,height=1.6cm]{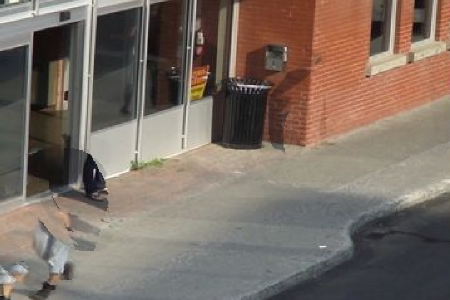} &
			\includegraphics[width=1.75cm,height=1.6cm]{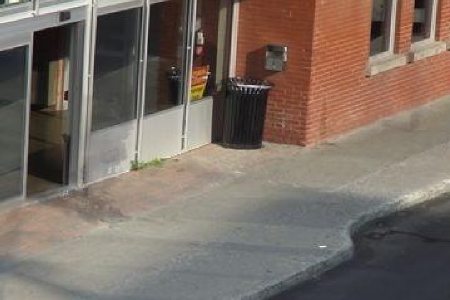} &
			\includegraphics[width=1.75cm,height=1.6cm]{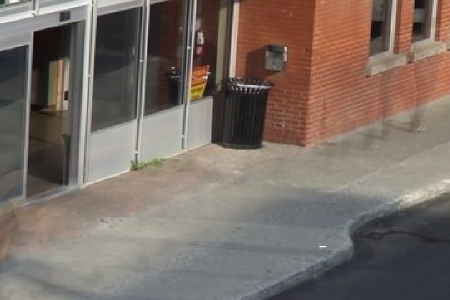} &
			\includegraphics[width=1.75cm,height=1.6cm]{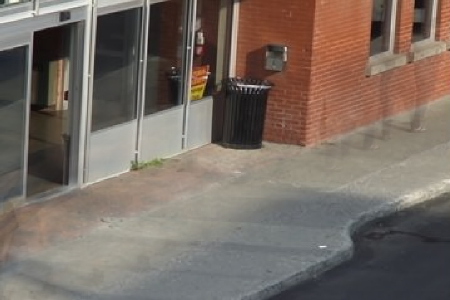}& 	\includegraphics[width=1.75cm,height=1.6cm]{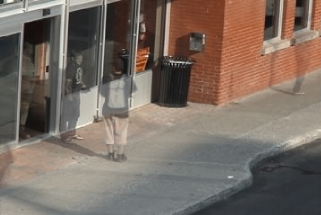} &
			\includegraphics[width=1.75cm,height=1.6cm]{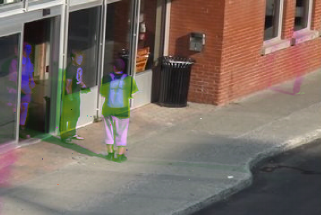} &
			\includegraphics[width=1.75cm,height=1.6cm]{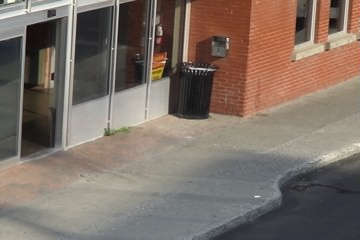}&
			\includegraphics[width=1.75cm,height=1.6cm]{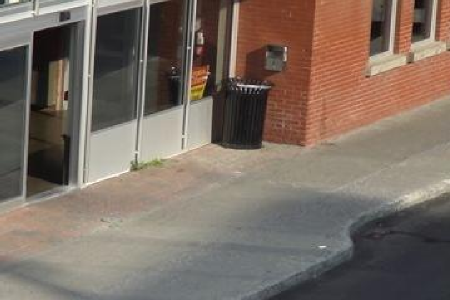} \\
			\multicolumn{1}{c}{{(a)}} & \multicolumn{1}{c}{{(b)}} & \multicolumn{1}{c}{{(c)}} & \multicolumn{1}{c}{{(d)}} & \multicolumn{1}{c}{{(e)}} & \multicolumn{1}{c}{{(f)}} & \multicolumn{1}{c}{{(g)}} & \multicolumn{1}{c}{{(h)}} & \multicolumn{1}{c}{{(i)}} \\
		\end{tabular}
		\vspace{-0.2cm}
		\caption{\label{fig:SBMvisu} Representative frames of six videos from the SBMnet dataset. (a) Original frames; (b)–(h) respectively show the background images generated by ABM, BE‑AAPSA, DMD, Q‑DMD, ETRPCA, $p$-TRPCA, and our proposed method; (i) Ground truth. The videos are arranged from top to bottom as follows: ``511'', ``Blurred'', ``boulevardJam'', ``boulevard'', ``Badminton'', and ``BusStation''.}
		\vspace{-0.5cm}
	\end{figure*}

	\begin{table*}[htbp]
		\footnotesize
		\caption{Comparison of AGE, MSSSIM, PSNR, and CQM between proposed method and other state-of-the-art background modeling methods on SBMnet dataset. The bold numbers represent the best results for each metric on each video.}
		\centering
		\renewcommand\arraystretch{0.6}{
			\setlength{\tabcolsep}{1.8mm}{
				\begin{tabular}{lllllllllllll}
					\toprule
					Videos &  & ABM\cite{avola2017adaptive}   & \makecell[c]{BE-AA- \\ PSA}\cite{ramirez2017temporal}   & DMD\cite{2014Dynamic} & Q-DMD\cite{HAN2022103560}  & ETRPCA\cite{9170824} & $p$-TRPCA\cite{YAN2024104520} & \textbf{Ours} \\
					\midrule				
					\multirow{4}{*}{Basic (511)} & AGE$\downarrow$ & 6.1224 & 4.0511  & 4.0569 & 4.0425 & 6.0305 & 7.1467 & \textbf{3.7998}\\
					&  MSSSIM$\uparrow$ & 0.9407 & 0.9744 & 0.9780 & 0.9783 & 0.9601 & 0.9415 & \textbf{0.9830}
					\\ 
					&  PSNR$\uparrow$ & 26.5925 & 30.0319 & 30.2008 & 30.2637 & 27.8177 & 25.8395 & \textbf{30.7929} \\ 
					&  CQM$\uparrow$ & 28.5767 & 31.8292 & 31.8512 & 32.1079 & 29.5069 & 27.8292 & \textbf{32.8066} \\ 				
					\midrule 
					\multirow{4}{*}{Basic (Blurred)} & AGE$\downarrow$ & 2.4574  & 15.2057  & 11.3377 & 9.4219 & 2.9908 & 5.1130 & \textbf{2.2926}\\
					&  MSSSIM$\uparrow$ & 0.9906  & 0.8924  & 0.9561 & 0.9596 & 0.9927 & 0.9122 & \textbf{0.9939}
					\\ 
					&  PSNR$\uparrow$ & 36.5219  &  22.4556 & 24.3609 & 25.5840 & 35.6016 & 25.9039 & \textbf{37.6021} \\ 				
					&  CQM$\uparrow$ & 36.8815  &  23.3364 & 25.3249 & 26.4187 & 36.6346 & 26.5555 & \textbf{37.9613} \\ 
					\midrule
					\multirow{4}{*}{\makecell[l]{Clutter \\  (boulevardJam)}} & AGE$\downarrow$ & 7.6215 & 5.1418 & 5.6188 & 3.6353 &3.4880 & 4.3483 & \textbf{2.4745} \\
					&  MSSSIM$\uparrow$ & 0.7546 & 0.9219 & 0.9541 & 0.9325 &0.9364 & 0.8965 & \textbf{0.9703}
					\\ 
					&  PSNR$\uparrow$ & 24.7166 & 28.9114 & 30.2312 & 31.5366 & 31.2035 & 28.8058 & \textbf{32.9818} \\ 
					&  CQM$\uparrow$ & 26.0921 & 30.0986 & 31.0495 & 32.6002 & 32.2364 & 29.2690 & \textbf{34.0861}  \\ 
					\midrule
					\multirow{4}{*}{\makecell[l]{Jitter \\ (boulevard)} } & AGE$\downarrow$ & 11.4377 & 10.8262 & 9.4585 & 9.4098 & 13.1872  & 16.0369 & \textbf{9.1482} \\
					&  MSSSIM$\uparrow$ & 0.8776 & 0.8821 & 0.9116 & 0.9116 &  0.8193 &0.7464 & \textbf{0.9138}
					\\ 
					&  PSNR$\uparrow$ & 20.6235 & 21.1393 & 22.6940 & 22.7186 & 20.2411 & 18.7159 & \textbf{22.8243} \\ 
					&  CQM$\uparrow$ & 22.0462 & 22.5861 & 24.1742 & 24.2013 & 21.7597 &20.2132 & \textbf{24.3108}  \\ 
					\midrule
					\multirow{4}{*}{\makecell[l]{Jitter \\ (Badminton)}  } & AGE$\downarrow$ & 7.7833  &  4.3975 & 5.6188 & 3.9600 & 11.3827 & 6.9351 & \textbf{3.5479} \\
					&  MSSSIM$\uparrow$ & 0.7872 & 0.9204  & 0.9541 & 0.9570 & 0.6104 & 0.7993 & \textbf{0.9586}	\\ 
					&  PSNR$\uparrow$ & 23.9886 & 29.1352  & 30.2312 & 31.3778 & 21.6303 & 25.4854 & \textbf{31.8946} \\ 
					&  CQM$\uparrow$ & 24.9043 & 29.9490 & 31.0495 & 32.2622 & 22.8356 & 24.7705 & \textbf{32.7904}  \\ 
					\midrule
					\multirow{4}{*}{\makecell[l]{IntermittentMotion\\ (BusStation)} } & AGE$\downarrow$ & 6.5147 & 4.5206 & 6.4834 & 6.3356 & 7.1659 & 7.5664 & \textbf{4.7462} \\
					&  MSSSIM$\uparrow$ & 0.9128 & 0.9621 & 0.9512 & 0.9505 & 0.8851 & 0.8781 & \textbf{0.9808}	
					\\ 
					&  PSNR$\uparrow$ & 24.4467 & 30.0286 & 28.3237 & 28.0328 & 24.5661 & 24.0556 & \textbf{32.2511} \\ 
					&  CQM$\uparrow$ & 25.5588 & 30.9833 & 29.2942 & 29.1266 & 25.7711 & 23.6989 & \textbf{33.0502}  \\ 
					\bottomrule
					\label{sbm}
		\end{tabular}}}
	\end{table*}

	To further validate that the performance gain of our method stems from the proposed TOSS strategy rather than merely from the incorporation of TV regularization, we additionally compare our method with several TV-based low-rank tensor recovery approaches originally designed for hyperspectral image denoising, including LRTV \cite{he2015total}, TLR-SSTV \cite{chen2018tensor}, TCTV \cite{WangHailin2023}, and SSCTV-RPCA \cite{GAO2025128885}. Although these methods were developed for denoising tasks, they all follow a low-rank-plus-sparse (L+S) decomposition paradigm within the tensor RPCA framework, which is conceptually related to our background modeling formulation. Therefore, we adapt them to the video background extraction task by treating the low-rank component as the background model.
	
	As shown in the visual results (Fig. \ref{fig:SBMTV}) and quantitative comparisons (Table \ref{sbmtv}), our method consistently outperforms these TV-based baselines across all evaluation metrics. Specifically, for the ``Basic (511)" sequence, our method improves PSNR by approximately 3.4 dB over the best TV-based competitor (SSCTV-RPCA) and raises MSSSIM from 0.9515 to 0.9830. On the challenging ``Jitter (Badminton)" sequence, which exhibits severe camera motion, our method outperforms TCTV and SSCTV-RPCA by margins of 11.0 dB and 10.8 dB in PSNR, respectively, while also achieving the lowest AGE. For the ``IntermittentMotion (BusStation)" sequence, our method again surpasses all baselines, yielding a PSNR of 32.2511 dB compared to 22.4911 dB from TLR-SSTV and 19.5868 dB from SSCTV-RPCA. These quantitative results, together with the visual comparisons in Fig.~\ref{fig:SBMvisu}, demonstrate that the performance gain of our method stems from the proposed TOSS strategy rather than merely from the incorporation of TV regularization. By explicitly splitting the tensor into a dominant component lying in a prescribed subspace and a residual component in its orthogonal complement, and then applying TV regularization separately to each component, our TOSS framework achieves more accurate background separation with superior robustness across diverse video scenarios. This result is consistent with the theoretical properties established in the proposed framework.

	\begin{figure*}[tp]
		\renewcommand{\arraystretch}{1.2}
		\setlength\tabcolsep{1pt}
		\centering
		\vspace{-0.1cm}
		\begin{tabular}{ccccccc}
			\centering
			\scriptsize{Original} & \scriptsize{LRTV} & \scriptsize{TLR-SSTV} & \scriptsize{TCTV} & \scriptsize{SSCTV-RPCA} &\scriptsize{\textbf{Ours}} & \scriptsize{Ground truth} \\
			\includegraphics[width=2cm,height=1.6cm]{SBM/ori/511_ori-eps-converted-to.pdf} &
			\includegraphics[width=2cm,height=1.6cm]{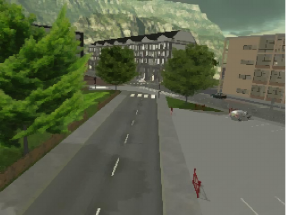} &
			\includegraphics[width=2cm,height=1.6cm]{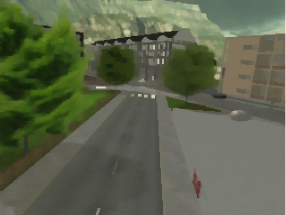} &
			\includegraphics[width=2cm,height=1.6cm]{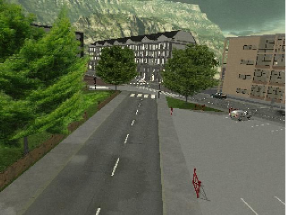} &
			\includegraphics[width=2cm,height=1.6cm]{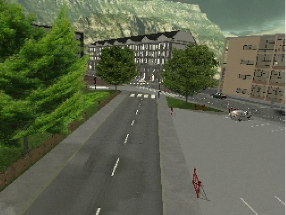} &
			\includegraphics[width=2cm,height=1.6cm]{SBM/Our_511.png} &
			\includegraphics[width=2cm,height=1.6cm]{SBM/GT_511-eps-converted-to.pdf} \\
			\hline
			\includegraphics[width=2cm,height=1.6cm]{SBM/ori/Blurred_ori-eps-converted-to.pdf} &
			\includegraphics[width=2cm,height=1.6cm]{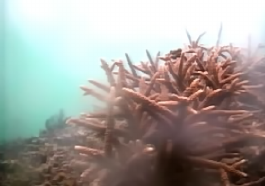} &
			\includegraphics[width=2cm,height=1.6cm]{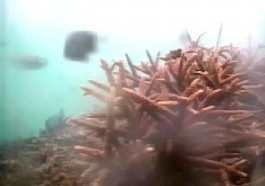} &
			\includegraphics[width=2cm,height=1.6cm]{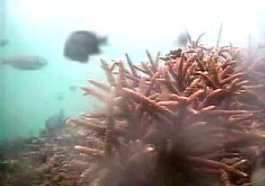} &
			\includegraphics[width=2cm,height=1.6cm]{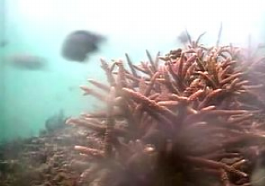}&
			\includegraphics[width=2cm,height=1.6cm]{SBM/Our_Blurred.png}& \includegraphics[width=2cm,height=1.6cm]{SBM/GT_Blurred-eps-converted-to.pdf} \\
			\hline
			\includegraphics[width=2cm,height=1.6cm]{SBM/ori/boulvardJam_ori-eps-converted-to.pdf} &
			\includegraphics[width=2cm,height=1.6cm]{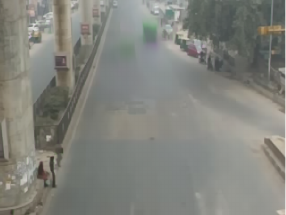} &
			\includegraphics[width=2cm,height=1.6cm]{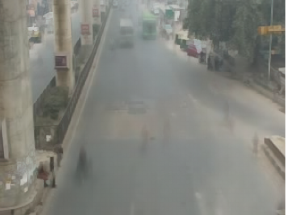} &
			\includegraphics[width=2cm,height=1.6cm]{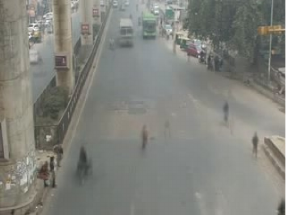} &
			\includegraphics[width=2cm,height=1.6cm]{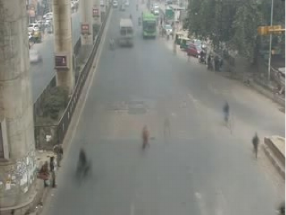}&
			\includegraphics[width=2cm,height=1.6cm]{SBM/Our_boulvardJam.png} &
			\includegraphics[width=2cm,height=1.6cm]{SBM/GT_boulvardJam-eps-converted-to.pdf}  \\
			\hline
			\includegraphics[width=2cm,height=1.6cm]{SBM/ori/boulevard_ori-eps-converted-to.pdf} &
			\includegraphics[width=2cm,height=1.6cm]{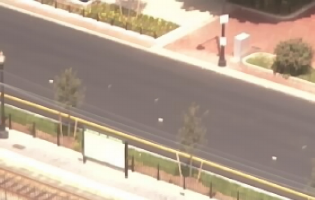} &
			\includegraphics[width=2cm,height=1.6cm]{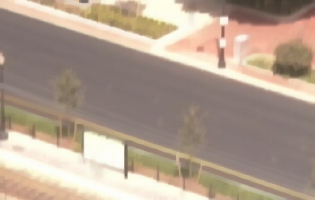} &
			\includegraphics[width=2cm,height=1.6cm]{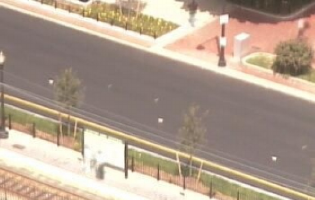} &
			\includegraphics[width=2cm,height=1.6cm]{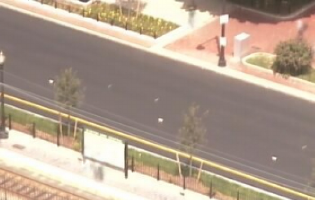}&	
			\includegraphics[width=2cm,height=1.6cm]{SBM/Our_boulevard.png} &	\includegraphics[width=2cm,height=1.6cm]{SBM/GT_boulevard-eps-converted-to.pdf} \\
			\hline
			\includegraphics[width=2cm,height=1.6cm]{SBM/ori/badminton_ori-eps-converted-to.pdf} &
			\includegraphics[width=2cm,height=1.6cm]{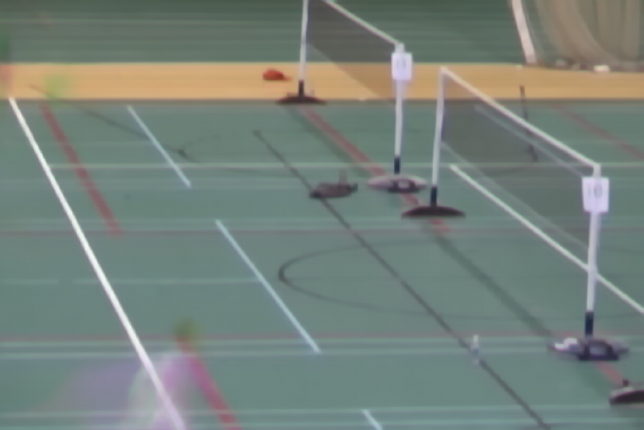} &
			\includegraphics[width=2cm,height=1.6cm]{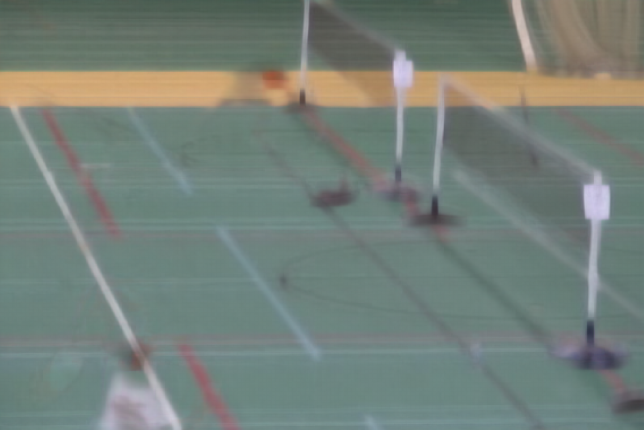} &
			\includegraphics[width=2cm,height=1.6cm]{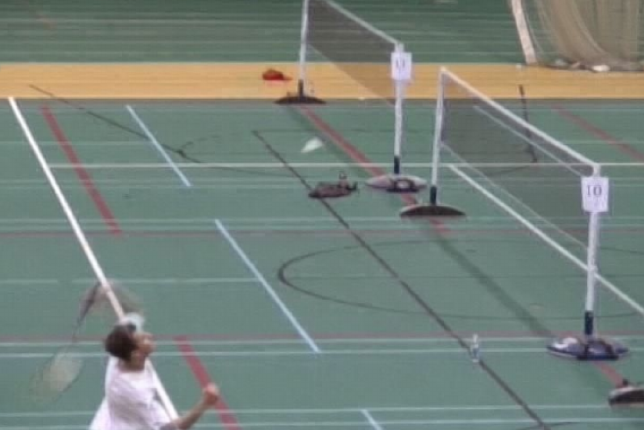} &
			\includegraphics[width=2cm,height=1.6cm]{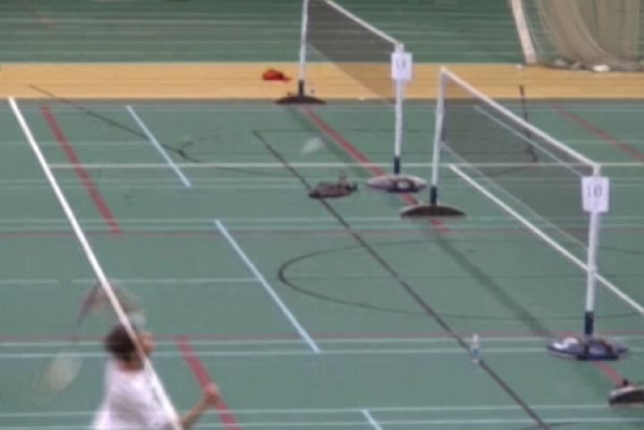} &
			\includegraphics[width=2cm,height=1.6cm]{SBM/Our_badminton.png}&	\includegraphics[width=2cm,height=1.6cm]{SBM/GT_badminton-eps-converted-to.pdf} \\
			\hline
			\includegraphics[width=2cm,height=1.6cm]{SBM/ori/busStation_ori-eps-converted-to.pdf} &
			\includegraphics[width=2cm,height=1.6cm]{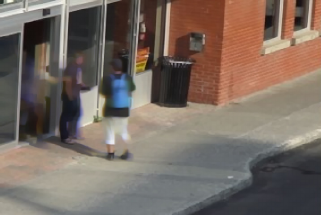} &
			\includegraphics[width=2cm,height=1.6cm]{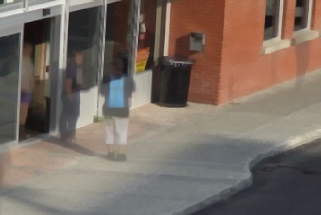} &
			\includegraphics[width=2cm,height=1.6cm]{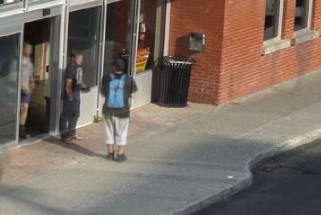} &
			\includegraphics[width=2cm,height=1.6cm]{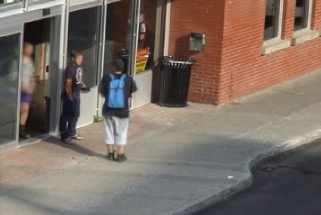}& 	
			\includegraphics[width=2cm,height=1.6cm]{SBM/Our_busStation.png}&
			\includegraphics[width=2cm,height=1.6cm]{SBM/GT_busStation-eps-converted-to.pdf} \\
			\multicolumn{1}{c}{{(a)}} & \multicolumn{1}{c}{{(b)}} & \multicolumn{1}{c}{{(c)}} & \multicolumn{1}{c}{{(d)}} & \multicolumn{1}{c}{{(e)}} & \multicolumn{1}{c}{{(f)}} & \multicolumn{1}{c}{{(g)}}  \\
		\end{tabular}
		\vspace{-0.2cm}
		\caption{\label{fig:SBMTV} Representative frames of six videos from the SBMnet dataset. (a) Original frames; (b)–(f) respectively show the background images generated by LRTV, TLR-SSTV, TCTV, SSCTV-RPCA, and our proposed method; (g) Ground truth.}
		\vspace{-0.3cm}
	\end{figure*}

	\begin{table*}[htbp]
		\footnotesize
		\caption{Quantitative evaluation on the SBMnet dataset: comparison of AGE, MSSSIM, PSNR,  and CQM. The bold numbers represent the best results for each metric on each video.}
		\centering
		\renewcommand\arraystretch{0.4}{
			\setlength{\tabcolsep}{3.0mm}{
				\begin{tabular}{lllllll}
					\toprule
					Videos &  & LRTV \cite{he2015total} & \makecell[c]{TLR- \\ SSTV} \cite{chen2018tensor} & TCTV \cite{WangHailin2023} & \makecell[c]{SSCTV- \\ RPCA} \cite{GAO2025128885} & \textbf{Ours} \\
					\midrule				
					\multirow{5}{*}{Basic (511)} & AGE$\downarrow$ & 6.5417 & 7.5416 & 5.8310 & 5.4763 & \textbf{3.7998}\\
					& MSSSIM$\uparrow$ & 0.9337 & 0.8999 & 0.9502 & 0.9515 & \textbf{0.9830}\\
					& PSNR$\uparrow$ & 25.8402 & 24.3712 & 27.3777 & 27.3557 & \textbf{30.7929}\\ 
					& CQM$\uparrow$ & 27.8452 & 26.2097 & 29.5372 & 29.4747 & \textbf{32.8066}\\ 			
					\midrule 
					\multirow{5}{*}{Basic (Blurred)} & AGE$\downarrow$ & 2.4874 & 5.6668 & 5.8126 & 5.4466 & \textbf{2.2926}\\
					& MSSSIM$\uparrow$ & 0.9933 & 0.9102 & 0.8976 & 0.9038 & \textbf{0.9939}\\
					& PSNR$\uparrow$ & 34.8302 & 24.4024 & 23.1763 & 23.7043 & \textbf{37.6021}\\ 				
					& CQM$\uparrow$ & 35.3560 & 25.4210 & 24.2387 & 24.7246 & \textbf{37.9613}\\ 
					\midrule
					\multirow{5}{*}{\makecell[l]{Clutter \\ (boulevardJam)}} & AGE$\downarrow$ & 3.0316 & 3.6233 & 4.2707 & 4.2205 & \textbf{2.4745}\\
					& MSSSIM$\uparrow$ & 0.9573 & 0.9363 & 0.8985 & 0.8975 & \textbf{0.9703}\\
					& PSNR$\uparrow$ & 31.5614 & 30.7066 & 28.4721 & 28.6762 & \textbf{32.9818}\\ 
					& CQM$\uparrow$ & 32.4626 & 31.8586 & 29.7340 & 29.9024 & \textbf{34.0861}\\ 
					\midrule
					\multirow{5}{*}{\makecell[l]{Jitter \\ (boulevard)}} & AGE$\downarrow$ & 15.5296 & 10.3665 & 15.8778 & 15.9789 & \textbf{9.1482}\\
					& MSSSIM$\uparrow$ & 0.7422 & 0.8697 & 0.7429 & 0.7354 & \textbf{0.9138}\\
					& PSNR$\uparrow$ & 18.5514 & 22.0058 & 18.6744 & 18.5079 & \textbf{22.8243}\\ 
					& CQM$\uparrow$ & 20.1310 & 23.4772 & 20.2264 & 20.0427 & \textbf{24.3108}\\ 
					\midrule
					\multirow{5}{*}{\makecell[l]{Jitter \\ (Badminton)}} & AGE$\downarrow$ & 10.1949 & 5.6367 & 12.1285 & 11.8425 & \textbf{3.5479}\\
					& MSSSIM$\uparrow$ & 0.6416 & 0.8456 & 0.5730 & 0.5804 & \textbf{0.9586}\\ 
					& PSNR$\uparrow$ & 22.3186 & 26.8973 & 20.8748 & 21.0902 & \textbf{31.8946}\\ 
					& CQM$\uparrow$ & 23.1666 & 27.6751 & 21.7829 & 21.9652 & \textbf{32.7904}\\ 
					\midrule
					\multirow{5}{*}{\makecell[l]{IntermittentMotion\\ (BusStation)}} & AGE$\downarrow$ & 9.6390 & 8.7636 & 9.7167 & 10.4638 & \textbf{4.7462}\\
					& MSSSIM$\uparrow$ & 0.8154 & 0.8489 & 0.8189 & 0.7987 & \textbf{0.9808}\\ 
					& PSNR$\uparrow$ & 20.3146 & 22.4911 & 20.7041 & 19.5868 & \textbf{32.2511}\\ 
					& CQM$\uparrow$ & 21.3159 & 23.5519 & 21.7803 & 20.6251 & \textbf{33.0502}\\ 
					\bottomrule
					\label{sbmtv}
				\end{tabular}
			}
		}
	\end{table*}

	\section{Conclusion}\label{sec:6}
	This paper introduced a novel theoretical framework, termed Tensor Orthogonal Subspace Split (TOSS), for splitting high-dimensional tensor data into two orthogonal components along a prescribed mode under an underlying subspace structure. By extending the classical idea of orthogonal projection from vector spaces to tensors in a consistent fiber-wise manner, TOSS establishes a principled mechanism for separating dominant subspace-aligned information from orthogonal residual variations. Several fundamental properties of the proposed framework were derived, and an important special case, rank-one TOSS, was further investigated. Building upon this theory, rank-one TOSS was applied to hyperspectral image restoration and color video background modeling, for which corresponding optimization models and efficient ADMM-based algorithms were developed. Experimental results demonstrated the effectiveness of the proposed methods and validated the practical value of the rank-one TOSS mechanism in these representative applications.
	
	The present study also suggests several promising directions for future research. First, although rank-one TOSS has been validated on two representative tasks, the broader potential of the general TOSS framework remains largely unexplored in concrete application scenarios. Extending the current theory to richer subspace configurations and verifying its effectiveness across a wider range of multidimensional data analysis problems would be of substantial interest. Second, the current models rely on a prescribed dominant direction, whereas a more adaptive mechanism that updates the principal direction within the optimization process may enable a more accurate and data-driven estimation of $\mathcal{X}_{+}$ and $\mathcal{X}_{-}$. Third, TOSS opens up a natural interface with deep learning: by designing dedicated yet cooperative neural architectures for the subspace-consistent component and the orthogonal residual component, it may be possible to combine the interpretability of the proposed theory with the expressive power of modern data-driven models. These directions point to a broader research agenda in which TOSS serves not only as a theoretical tool for structured tensor splitting, but also as a foundation for future developments in optimization-based modeling and learning-based multidimensional data analysis.

\bibliographystyle{IEEEtran}
\bibliography{references}

@article{kolda2009tensor,
	title={Tensor decompositions and applications},
	author={Kolda, Tamara G and Bader, Brett W},
	journal={SIAM Review},
	volume={51},
	number={3},
	pages={455--500},
	year={2009},
	publisher={SIAM}
}

@article{chen2018tensor,
	title={Tensor nuclear norm-based low-rank approximation with total variation regularization},
	author={Chen, Yongyong and Wang, Shuqin and Zhou, Yicong},
	journal={IEEE Journal of Selected Topics in Signal Processing},
	volume={12},
	number={6},
	pages={1364--1377},
	year={2018},
	publisher={IEEE}
}

@article{guo2022logarithmic,
	title={Logarithmic Schatten-$ p $ p norm minimization for tensorial multi-view subspace clustering},
	author={Guo, Jipeng and Sun, Yanfeng and Gao, Junbin and Hu, Yongli and Yin, Baocai},
	journal={IEEE Transactions on Pattern Analysis and Machine Intelligence},
	volume={45},
	number={3},
	pages={3396--3410},
	year={2022},
	publisher={IEEE}
}

@article{cichocki2015tensor,
	title={Tensor decompositions for signal processing applications: From two-way to multiway component analysis},
	author={Cichocki, Andrzej and Mandic, Danilo and De Lathauwer, Lieven and Zhou, Guoxu and Zhao, Qibin and Caiafa, Cesar and Phan, Huy Anh},
	journal={IEEE Signal Processing Magazine},
	volume={32},
	number={2},
	pages={145--163},
	year={2015},
	publisher={IEEE}
}

@article{de2000multilinear,
	title={A multilinear singular value decomposition},
	author={De Lathauwer, Lieven and De Moor, Bart and Vandewalle, Joos},
	journal={SIAM Journal on Matrix Analysis and Applications},
	volume={21},
	number={4},
	pages={1253--1278},
	year={2000},
	publisher={SIAM}
}

@article{harshman1970foundations,
	title={Foundations of the PARAFAC procedure: Models and conditions for an “explanatory” multi-modal factor analysis},
	author={Harshman, Richard A and others},
	journal={UCLA Working Papers in Phonetics},
	volume={16},
	number={1},
	pages={84},
	year={1970},
	publisher={Los Angeles, CA}
}

@article{oseledets2011tensor,
	title={Tensor-train decomposition},
	author={Oseledets, Ivan V},
	journal={SIAM Journal on Scientific Computing},
	volume={33},
	number={5},
	pages={2295--2317},
	year={2011},
	publisher={SIAM}
}

@article{zhao2016tensor,
	title={Tensor ring decomposition},
	author={Zhao, Qibin and Zhou, Guoxu and Xie, Shengli and Zhang, Liqing and Cichocki, Andrzej},
	journal={arXiv preprint arXiv:1606.05535},
	year={2016}
}

@inproceedings{zheng2021fully,
	title={Fully-connected tensor network decomposition and its application to higher-order tensor completion},
	author={Zheng, Yu-Bang and Huang, Ting-Zhu and Zhao, Xi-Le and Zhao, Qibin and Jiang, Tai-Xiang},
	booktitle={Proceedings of the AAAI Conference on Artificial Intelligence},
	volume={35},
	number={12},
	pages={11071--11078},
	year={2021}
}

@article{kilmer2013third,
	title={Third-order tensors as operators on matrices: A theoretical and computational framework with applications in imaging},
	author={Kilmer, Misha E and Braman, Karen and Hao, Ning and Hoover, Randy C},
	journal={SIAM Journal on Matrix Analysis and Applications},
	volume={34},
	number={1},
	pages={148--172},
	year={2013},
	publisher={SIAM}
}

@article{lu2019tensor,
	title={Tensor robust principal component analysis with a new tensor nuclear norm},
	author={Lu, Canyi and Feng, Jiashi and Chen, Yudong and Liu, Wei and Lin, Zhouchen and Yan, Shuicheng},
	journal={IEEE Transactions on Pattern Analysis and Machine Intelligence},
	volume={42},
	number={4},
	pages={925--938},
	year={2019},
	publisher={IEEE}
}

@inproceedings{lu2016tensor,
	title={Tensor robust principal component analysis: Exact recovery of corrupted low-rank tensors via convex optimization},
	author={Lu, Canyi and Feng, Jiashi and Chen, Yudong and Liu, Wei and Lin, Zhouchen and Yan, Shuicheng},
	booktitle={Proceedings of the IEEE Conference on Computer Vision and Pattern Recognition},
	pages={5249--5257},
	year={2016}
}

@article{bioucas2012hyperspectral,
	title={Hyperspectral unmixing overview: Geometrical, statistical, and sparse regression-based approaches},
	author={Bioucas-Dias, Jos{\'e} M and Plaza, Antonio and Dobigeon, Nicolas and Parente, Mario and Du, Qian and Gader, Paul and Chanussot, Jocelyn},
	journal={IEEE Journal of Selected Topics in Applied Earth Observations and Remote Sensing},
	volume={5},
	number={2},
	pages={354--379},
	year={2012},
	publisher={IEEE}
}

@article{yang2025subspace,
	title={Subspace-based coupled tensor decomposition for hyperspectral blind fusion},
	author={Yang, Kunjing and Bai, Minru and Dian, Renwei and Lu, Ting},
	journal={Inverse Problems and Imaging},
	volume={19},
	number={3},
	pages={560--591},
	year={2025},
	publisher={Inverse Problems and Imaging}
}

@book{Horn2012MatrixAnalysis,
	title={Matrix Analysis},
	author={Horn, Roger A. and Johnson, Charles R.},
	publisher={Cambridge University Press},
	year={2012},
	edition={2nd},
	pages={461--469}
}

@article{chang2020orthogonal,
	title={Orthogonal subspace projection-based go-decomposition approach to finding low-rank and sparsity matrices for hyperspectral anomaly detection},
	author={Chang, Chein-I and Cao, Hongju and Chen, Shuhan and Shang, Xiaodi and Yu, Chunyan and Song, Meiping},
	journal={IEEE Transactions on Geoscience and Remote Sensing},
	volume={59},
	number={3},
	pages={2403--2429},
	year={2020},
	publisher={IEEE}
}

@article{zhou2019tensor,
	title={Tensor low-rank representation for data recovery and clustering},
	author={Zhou, Pan and Lu, Canyi and Feng, Jiashi and Lin, Zhouchen and Yan, Shuicheng},
	journal={IEEE Transactions on Pattern Analysis and Machine Intelligence},
	volume={43},
	number={5},
	pages={1718--1732},
	year={2019},
	publisher={IEEE}
}

@article{sedighin2024tensor,
	title={Tensor methods in biomedical image analysis},
	author={Sedighin, Farnaz},
	journal={Journal of Medical Signals \& Sensors},
	volume={14},
	number={6},
	pages={16},
	year={2024},
	publisher={Medknow}
}

@article{khaleghi2013multisensor,
	title={Multisensor data fusion: A review of the state-of-the-art},
	author={Khaleghi, Bahador and Khamis, Alaa and Karray, Fakhreddine O and Razavi, Saiedeh N},
	journal={Information fusion},
	volume={14},
	number={1},
	pages={28--44},
	year={2013},
	publisher={Elsevier}
}

@article{neal2011distributed,
	title={Distributed optimization and statistical learning via the alternating direction method of multipliers},
	author={Neal, Parikh and Eric, Chu and Borja, Peleato and Jonathan, Eckstein},
	journal={Foundations and Trends{\textregistered} in Machine learning},
	volume={3},
	number={1},
	pages={1--122},
	year={2011},
	publisher={Emerald Publishing Limited}
}

@article{han2023multi,
	title={Multi-dimensional data recovery via feature-based fully-connected tensor network decomposition},
	author={Han, Zhi-Long and Huang, Ting-Zhu and Zhao, Xi-Le and Zhang, Hao and Liu, Yun-Yang},
	journal={IEEE Transactions on Big Data},
	volume={10},
	number={4},
	pages={386--399},
	year={2023},
	publisher={IEEE}
}

@article{auddy2025tensors,
	title={Tensors in high-dimensional data analysis: Methodological opportunities and theoretical challenges},
	author={Auddy, Arnab and Xia, Dong and Yuan, Ming},
	journal={Annual Review of Statistics and Its Application},
	volume={12},
	number={1},
	pages={527--551},
	year={2025},
	publisher={Annual Reviews}
}

@article{TOKCAN2026110191,
	title = {Tensor decompositions for signal processing: Theory, advances, and applications},
	journal = {Signal Processing},
	volume = {238},
	pages = {110191},
	year = {2026},
	author = {Neriman Tokcan and Shakir Showkat Sofi and Van Tien Pham and Clémence Prévost and Sofiane Kharbech and Baptiste Magnier and Thanh Phuong Nguyen and Yassine Zniyed and Lieven {De Lathauwer}},
}

@article{heng2023robust,
	title={Robust low-rank tensor decomposition with the {L}2 criterion},
	author={Heng, Qiang and Chi, Eric C and Liu, Yufeng},
	journal={Technometrics},
	volume={65},
	number={4},
	pages={537--552},
	year={2023},
	publisher={Taylor \& Francis}
}

@article{wu2022tensor,
	title={Tensor wheel decomposition and its tensor completion application},
	author={Wu, Zhong-Cheng and Huang, Ting-Zhu and Deng, Liang-Jian and Dou, Hong-Xia and Meng, Deyu},
	journal={Advances in Neural Information Processing Systems},
	volume={35},
	pages={27008--27020},
	year={2022}
}

@inproceedings{nie2021adaptive,
	title={Adaptive Tensor Networks Decomposition.},
	author={Nie, Chang and Wang, Huan and Tian, Le},
	booktitle={BMVC},
	pages={148},
	year={2021}
}

@article{nie2023adaptive,
	title={Adaptive tensor networks decomposition for high-order tensor recovery and compression},
	author={Nie, Chang and Wang, Huan and Zhao, Lu},
	journal={Information Sciences},
	volume={629},
	pages={667--684},
	year={2023},
	publisher={Elsevier}
}

@article{liu2024adaptively,
	title={Adaptively topological tensor network for multi-view subspace clustering},
	author={Liu, Yipeng and Chen, Jie and Lu, Yingcong and Ou, Weiting and Long, Zhen and Zhu, Ce},
	journal={IEEE Transactions on Knowledge and Data Engineering},
	volume={36},
	number={11},
	pages={5562--5575},
	year={2024},
	publisher={IEEE}
}

@book{kreyszig1991introductory,
	title={Introductory functional analysis with applications},
	author={Kreyszig, Erwin},
	year={1991},
	publisher={John Wiley \& Sons}
}

@misc{nimark2012projection,
	title={The Projection Theorem},
	author={Nimark, Kristoffer},
	year={2012}
}

@article{liu2023survey,
	title={A survey on hyperspectral image restoration: From the view of low-rank tensor approximation},
	author={Liu, Na and Li, Wei and Wang, Yinjian and Tao, Ran and Du, Qian and Chanussot, Jocelyn},
	journal={Science China Information Sciences},
	volume={66},
	number={4},
	pages={140302},
	year={2023},
	publisher={Springer}
}

@article{huang2015provable,
  title={Provable models for robust low-rank tensor completion},
  author={Huang, Bo and Mu, Cun and Goldfarb, Donald and Wright, John},
  journal={Pacific Journal of Optimization},
  volume={11},
  number={2},
  pages={339--364},
  year={2015}
}

@article{he2015total,
  title={Total-variation-regularized low-rank matrix factorization for hyperspectral image restoration},
  author={He, Wei and Zhang, Hongyan and Zhang, Liangpei and Shen, Huanfeng},
  journal={IEEE Transactions on Geoscience and Remote Sensing},
  volume={54},
  number={1},
  pages={178--188},
  year={2015},
  publisher={IEEE}
}

@article{WangHailin2023,
  author={Wang, Hailin and Peng, Jiangjun and Qin, Wenjin and Wang, Jianjun and Meng, Deyu},
  journal={IEEE Transactions on Pattern Analysis and Machine Intelligence}, 
  title={Guaranteed Tensor Recovery Fused Low-rankness and Smoothness}, 
  year={2023},
  volume={45},
  number={9},
  pages={10990-11007}}

@article{GAO2025128885,
title = {Hyperspectral image restoration via RPCA model based on spectral-spatial correlated total variation regularizer},
journal = {Neurocomputing},
volume = {619},
pages = {128885},
year = {2025},
issn = {0925-2312},
author = {Junheng Gao and Hailin Wang and Jiangjun Peng}}

@ARTICLE{5705575,
  author={Zhang, Lin and Zhang, Lei and Mou, Xuanqin and Zhang, David},
  journal={IEEE Transactions on Image Processing}, 
  title={FSIM: A Feature Similarity Index for Image Quality Assessment}, 
  year={2011},
  volume={20},
  number={8},
  pages={2378-2386}}

@ARTICLE{1284395,
  author={Zhou Wang and Bovik, A.C. and Sheikh, H.R. and Simoncelli, E.P.},
  journal={IEEE Transactions on Image Processing}, 
  title={Image quality assessment: from error visibility to structural similarity}, 
  year={2004},
  volume={13},
  number={4},
  pages={600-612}}

@book{wald2002data,
  author    = {Wald, Lucien},
  title     = {Data Fusion: Definitions and Architectures: Fusion of Images of Different Spatial Resolutions},
  publisher = {Presses des MINES},
  year      = {2002},
  address   = {Paris},
 }

@article{jodoin2017extensive,
	title={Extensive benchmark and survey of modeling methods for scene background initialization},
	author={Jodoin, Pierre-Marc and Maddalena, Lucia and Petrosino, Alfredo and Wang, Yi},
	journal={IEEE Transactions on Image Processing},
	volume={26},
	number={11},
	pages={5244--5256},
	year={2017},
	doi= {https://doi.org/10.1109/TIP.2017.2728181}
}

@article{BOUWMANS20173,
title = {Scene background initialization: A taxonomy},
journal = {Pattern Recognition Letters},
volume = {96},
pages = {3-11},
year = {2017},
note = {Scene Background Modeling and Initialization},
issn = {0167-8655},
author = {Thierry Bouwmans and Lucia Maddalena and Alfredo Petrosino}}

@article{Yalman2013,
  author = {Yalman, Yildiray and ERT{\"U}RK, {\.I}SMA{\.I}L},
  title = {A new color image quality measure based on {YUV} transformation and {PSNR} for human vision system},
  journal = {Turkish Journal of Electrical Engineering and Computer Sciences},
  volume = {21},
  number = {2},
  pages = {603--612},
  year = {2013},
   publisher = {The Scientific and Technological Research Council of Turkey}
}

@article{ramirez2017temporal,
	title={Temporal weighted learning model for background estimation with an automatic re-initialization stage and adaptive parameters update},
	author={Ramirez-Alonso, Graciela and Ramirez-Quintana, Juan A and Chacon-Murguia, Mario I},
	journal={Pattern Recognition Letters},
	volume={96},
	pages={34--44},
	year={2017}
}

@article{avola2017adaptive,
	title={Adaptive bootstrapping management by keypoint clustering for background initialization},
	author={Avola, Danilo and Bernardi, Marco and Cinque, Luigi and Foresti, Gian Luca and Massaroni, Cristiano},
	journal={Pattern Recognition Letters},
	volume={100},
	pages={110--116},
	year={2017}}

@article{2014Dynamic,
  title={Dynamic Mode Decomposition for Real-Time Background/Foreground Separation in Video},
  author={ Grosek, Jacob  and  Kutz, J. Nathan },
  journal={Computer Science},
  year={2014},
}

@article{HAN2022103560,
title = {Quaternion-based dynamic mode decomposition for background modeling in color videos},
journal = {Computer Vision and Image Understanding},
volume = {224},
pages = {103560},
year = {2022},
issn = {1077-3142},
author = {Juan Han and Kit Ian Kou and Jifei Miao},
}

@ARTICLE{9170824,
  author={Gao, Quanxue and Zhang, Pu and Xia, Wei and Xie, Deyan and Gao, Xinbo and Tao, Dacheng},
  journal={IEEE Transactions on Pattern Analysis and Machine Intelligence}, 
  title={Enhanced Tensor RPCA and its Application}, 
  year={2021},
  volume={43},
  number={6},
  pages={2133-2140}}

@article{YAN2024104520,
title = {Tensor robust principal component analysis via dual lp quasi-norm sparse constraints},
journal = {Digital Signal Processing},
volume = {150},
pages = {104520},
year = {2024},
issn = {1051-2004},
author = {Tinghe Yan and Qiang Guo}}

@misc{strang2022introduction,
	title={Introduction to Linear Algebra},
	author={Strang, Gilbert},
	year={2022},
	publisher={SIAM}
}

@article{kilmer2011factorization,
	title={Factorization strategies for third-order tensors},
	author={Kilmer, Misha E and Martin, Carla D},
	journal={Linear Algebra and its Applications},
	volume={435},
	number={3},
	pages={641--658},
	year={2011},
	publisher={Elsevier}
}
	
\end{document}